\renewcommand\emptyset{\varnothing}
\newcommand\R{\mathbb{R}}
\newcommand\V{\mathsf{V}}
\newcommand\cone{\mathrm{cone}}
\renewcommand\int{\mathsf{int}}
\newtheorem{thm}{Theorem}[section]
\newtheorem{cor}[thm]{Corollary}
\newtheorem{lem}[thm]{Lemma}
\newtheorem{prop}[thm]{Proposition}
\newtheorem{conj}[thm]{Conjecture}
\newtheorem{quest}{Question}
\theoremstyle{definition}
\newtheorem{example}[thm]{Example}
\newtheorem{rem}[thm]{Remark}
\title{Linear recursions for integer point transforms}
\author{Katharina Jochemko}
\address{Department of Mathematics,
Royal Institute of Technology, %
SE--100 44 Stockholm, %
Sweden}
\email{jochemko@kth.se}
\keywords{lattice polytopes, integer point transforms, valuations, Brion's Theorem, Schur polynomials}
\subjclass[2010]{06A07, 52B12, 52B20, 52B45}
\date{\today}
\begin{document}

\begin{abstract}
We consider the integer point transform $\sigma _P (\mathbf{x}) = \sum _{\mathbf{m} \in P\cap \mathbb{Z}^n} \mathbf{x}^\mathbf{m} \in \mathbb C [x_1^{\pm 1},\ldots, x_n^{\pm 1}]$ of a polytope $P\subset \mathbb{R}^n$. We show that if $P$ is a lattice polytope then for any polytope $Q$ the sequence $\lbrace \sigma _{kP+Q}(\mathbf{x})\rbrace _{k\geq 0}$ satisfies a multivariate linear recursion that only depends on the vertices of $P$. We recover Brion's Theorem and by applying our results to Schur polynomials we disprove a conjecture of Alexandersson (2014).
\end{abstract}

\maketitle

\section{Introduction}\label{sec:intro}

A \textbf{polytope} is the convex hull of finitely many points in $\mathbb{R}^n$.
A polytope is a \textbf{lattice polytope} if all its vertices lie in the integer lattice $\mathbb{Z}^n$. The \textbf{integer point transform} of a polytope $P$ is defined by
\[
\sigma _P (x)=\sum _{\mathbf{m} \in P\cap \mathbb{Z}^n}\mathbf{x}^\mathbf{m} \in \mathbb{C}[x_1 ^{\pm 1},\ldots, x_n ^{\pm 1}] \, ,
\]
where $\mathbf{x}^\mathbf{m}$ denotes $x_1 ^{m_1}\cdots x_n ^{m_n}$ for all $\mathbf{m}\in \mathbb{Z}^n$. In this note we study sequences $\{\sigma _{kP} (\mathbf{x})\}_{k\geq 0}$ of integer point transforms of integer dilates of polytopes $P$ and relatives. We prove the following linear recursion.

\begin{thm}\label{thm:reclatsum}
Let $Q$ be a polytope in $\mathbb{R}^n$ and let $P$ be a lattice polytope with vertex set $\V(P)=\{v_1,\ldots,v_r\}$. Then the sequence $\lbrace \sigma _{kP+Q} (\mathbf{x})\rbrace _{k\geq 0}$  satisfies the linear recursion 
\begin{eqnarray*}
\sigma_{(k+r)P+Q} (\mathbf{x})&=&\sum _{\emptyset \neq I\subseteq [r]} (-1)^{1+\left|I\right|} \mathbf{x}^{\sum _{i\in I} v_i}\sigma_{(k+r-\left|I\right|)P +Q}(\mathbf{x}) \, 
\end{eqnarray*}
with characteristic polynomial
\[
 \chi _{P;Q} (X) \ := \ \prod _{\mathbf{v}\in \V(P)} (X-\mathbf{x}^{\mathbf{v}}) \, .
\]
If $Q$ is a lattice polytope, then  $\chi _{P;Q}$ is minimal.
\end{thm}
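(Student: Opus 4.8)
The recursion is exactly the statement that the shift operator $S\colon\sigma_{kP+Q}(\mathbf x)\mapsto\sigma_{(k+1)P+Q}(\mathbf x)$ is annihilated by $\chi_{P;Q}(S)=\prod_{\mathbf v\in\V(P)}(S-\mathbf x^{\mathbf v})$ on the sequence: expanding this product as $\sum_{I\subseteq[r]}(-1)^{|I|}\mathbf x^{\sum_{i\in I}v_i}S^{r-|I|}$ and isolating the $I=\emptyset$ term reproduces the displayed identity, so the characteristic polynomial is $\chi_{P;Q}$ by construction. Hence the first goal is to show that $F(\mathbf x;t):=\sum_{k\ge0}\sigma_{kP+Q}(\mathbf x)\,t^{k}$ is rational with denominator dividing $\prod_{\mathbf v\in\V(P)}(1-\mathbf x^{\mathbf v}t)$; this translates back into the recursion (the resulting numerator has $t$-degree below $r$, so it holds for every $k\ge0$).

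To prove this rationality I would package the whole sequence into one polyhedron. Set $\Pi:=(Q\times\{0\})+\cone(\{1\}\times P)\subseteq\R^{n}\times\R$; computing the Minkowski sum gives $\Pi=\{(\mathbf y,s):s\ge0,\ \mathbf y\in sP+Q\}$, so the lattice points of $\Pi$ at height $k$ are precisely those of $kP+Q$ and $F(\mathbf x;t)=\sum_{(\mathbf m,k)\in\Pi\cap\Z^{n+1}}\mathbf x^{\mathbf m}t^{k}$. The recession cone of $\Pi$ is $\cone(\{1\}\times P)$, which is pointed with extreme rays $\nR(\mathbf v,1)$, $\mathbf v\in\V(P)$. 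Then I would take a pulling triangulation of $\Pi$ — it introduces no new vertices or rays — so that every piece is the Minkowski sum of the convex hull of a subset of $\V(Q)\times\{0\}$ with the cone over a subset of $\{(\mathbf v,1):\mathbf v\in\V(P)\}$, and pass to the associated half-open decomposition, whose pieces partition $\Pi$. Since $\sigma$ is a valuation, summing over the pieces writes $F(\mathbf x;t)$ as a sum of rational functions of shape $g(\mathbf x,t)/\prod_{i\in I}(1-\mathbf x^{\mathbf v_i}t)$, and clearing to the common denominator $\prod_{\mathbf v\in\V(P)}(1-\mathbf x^{\mathbf v}t)$ gives exactly what is needed. (Equivalently, one may apply Brion's theorem directly to each $kP+Q$: its normal fan is the common refinement $\mathcal N(P)\wedge\mathcal N(Q)$, independent of $k\ge1$; a vertex is $k\mathbf v_\tau+\mathbf u_\tau$ for a maximal cone $\tau$, and the tangent cone there, moved to the origin, is the $k$-independent pointed cone $\tau^{\vee}$, so Brion gives $\sigma_{kP+Q}(\mathbf x)=\sum_{\mathbf v\in\V(P)}d_{\mathbf v}(\mathbf x)(\mathbf x^{\mathbf v})^{k}$ with $d_{\mathbf v}(\mathbf x)=\sum_{\tau:\mathbf v_\tau=\mathbf v}\mathbf x^{\mathbf u_\tau}\sigma_{\tau^{\vee}}(\mathbf x)$. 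Reading off the $d_{\mathbf v}$ from $F$ by partial fractions is also how Brion's theorem is recovered.)

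For minimality, note that since the $\mathbf x^{\mathbf v}$, $\mathbf v\in\V(P)$, are pairwise distinct in $\mathbb C(\mathbf x)$, the order-$r$ recursion with $r$ distinct characteristic roots forces $\sigma_{kP+Q}(\mathbf x)=\sum_{\mathbf v\in\V(P)}d_{\mathbf v}(\mathbf x)(\mathbf x^{\mathbf v})^{k}$ for all $k\ge0$, and the minimal polynomial of the sequence is $\prod_{\mathbf v:\,d_{\mathbf v}\neq0}(X-\mathbf x^{\mathbf v})$. So I must show $d_{\mathbf v}\neq0$ for every vertex, and this is where the hypothesis on $Q$ enters. Fix $\mathbf v_0\in\V(P)$ and choose $a\in\Z^{n}$, generic with respect to $\mathcal N(P)\wedge\mathcal N(Q)$, so that $\mathbf v_0$ is the unique $\langle a,\cdot\rangle$-minimizer of $P$; let $\mathbf u_0$ be the unique minimizer of $Q$. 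Substituting $x_i\mapsto t^{a_i}$ turns $\sigma_{kP+Q}$ into $\sum_{\mathbf m\in(kP+Q)\cap\Z^{n}}t^{\langle a,\mathbf m\rangle}$, whose smallest exponent is $\min_{kP+Q}\langle a,\cdot\rangle=k\langle a,\mathbf v_0\rangle+\langle a,\mathbf u_0\rangle$, attained only at $k\mathbf v_0+\mathbf u_0$ — a lattice point precisely because $P$ and $Q$ are lattice polytopes. Thus $\sigma_{kP+Q}(t^{a})$ has $t$-adic valuation exactly $k\langle a,\mathbf v_0\rangle+\langle a,\mathbf u_0\rangle$ for every $k\ge0$. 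If $d_{\mathbf v_0}=0$, then $\sigma_{kP+Q}(t^{a})=\sum_{\mathbf v\neq\mathbf v_0}d_{\mathbf v}(t^{a})\,t^{k\langle a,\mathbf v\rangle}$ has valuation at least $\min_{\mathbf v\neq\mathbf v_0}(\operatorname{val}d_{\mathbf v}(t^{a})+k\langle a,\mathbf v\rangle)$, which for large $k$ strictly exceeds $k\langle a,\mathbf v_0\rangle+\langle a,\mathbf u_0\rangle$ because $\langle a,\mathbf v\rangle>\langle a,\mathbf v_0\rangle$ — a contradiction. Hence $d_{\mathbf v_0}(\mathbf x)\neq0$, and since $\mathbf v_0$ was an arbitrary vertex, $\chi_{P;Q}$ is minimal.

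The algebra turning rationality of $F$ into the stated recursion is routine. The two delicate points are: (i) making the identity $\sigma_\Pi=\sum_{\text{pieces}}\sigma$ rigorous for the unbounded half-open pieces, which should be done in the ring where $\sigma$ of a pointed rational polyhedron lives (Lawrence–Khovanskii–Pukhlikov); and (ii) the minimality step, where genericity of $a$ together with $Q$ being a lattice polytope is exactly what makes the valuation of $\sigma_{kP+Q}(t^{a})$ \emph{affine-linear} in $k$ with leading coefficient $1$ — without the lattice hypothesis this can fail and some $d_{\mathbf v}$ can vanish, which is what underlies the counterexample to Alexandersson's conjecture. I expect (ii) to be the main obstacle.
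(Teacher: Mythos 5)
Your strategy is sound and would yield the theorem, but it takes a genuinely different route from the paper in both halves. For the recursion, the paper works entirely with valuations: it first proves the identity $\mathbf{1}_{(k+r)P}=\sum_{\emptyset\neq I\subseteq[r]}(-1)^{1+|I|}\mathbf{1}_{(k+r-|I|)P+\sum_{i\in I}v_i}$ on indicator functions (by inclusion--exclusion for the standard simplex, writing $(k+d)\Delta_{d-1}=\bigcup_i\{x_i\ge1\}$, and then pushing forward along an affine projection; equivalently by expanding Lawrence's identity $(\mathbf{1}_P-\mathbf{1}_{v_1})\star\cdots\star(\mathbf{1}_P-\mathbf{1}_{v_r})=0$), and then applies the valuations $P\mapsto P+Q$ and $P\mapsto\sigma_P$. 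This yields the recursion for every valuation at once and handles arbitrary, even irrational, $Q$ with no extra work. Your generating-function route through $\Pi=(Q\times\{0\})+\cone(\{1\}\times P)$ is viable but requires you to actually carry out the half-open simplicial decomposition of a possibly irrational polyhedron and verify the degree bound $\deg_t N\le r-1$ so that the recursion holds from $k=0$ on; this works (each simplicial cell $\conv(W)+\cone(R)$ tiles by lattice translates of $\conv(W)$ plus the half-open parallelepiped of $R$, whose heights lie in $[0,|R|)$), but it is genuine work, and your parenthetical shortcut via Brion's theorem silently uses $\mathbf{x}^{\mathbf{u}_\tau}$ with $\mathbf{u}_\tau$ a vertex of $Q$, so it does not cover non-lattice $Q$ and would also invert the paper's logic, which derives Brion's theorem from this very recursion. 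For minimality the two arguments rest on the same geometric fact --- a linear functional uniquely optimized at $\mathbf{u}\in\V(P)$ and at some $\mathbf{v}\in\V(Q)$ certifies the lattice point $k\mathbf{u}+\mathbf{v}$ as a vertex of $kP+Q$ --- but package it differently: the paper exhibits the monomial $\mathbf{x}^{(k+r)\mathbf{u}+\mathbf{v}}$ as occurring in $\sigma_{(k+r)P+Q}$ but in no term of the shortened recursion, whereas you show every coefficient $d_{\mathbf{v}}$ in $\sigma_{kP+Q}=\sum_{\mathbf{v}}d_{\mathbf{v}}\mathbf{x}^{k\mathbf{v}}$ is nonzero via a $t$-adic valuation estimate after the substitution $x_i\mapsto t^{a_i}$. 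Your version buys a little more, namely the description of the minimal polynomial as $\prod_{d_{\mathbf{v}}\neq0}(X-\mathbf{x}^{\mathbf{v}})$ in general, at the cost of one additional genericity condition you should state explicitly: $a$ must also avoid the finitely many degeneracy loci of the numerators and denominators of the $d_{\mathbf{v}}$, so that each $d_{\mathbf{v}}(t^{a})$ is a well-defined nonzero rational function of $t$ with a finite order at $t=0$. Contrary to your closing remark, step (ii) is not the main obstacle --- it goes through essentially as written --- while step (i) is where the remaining work actually sits.
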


In particular, the recursion only depends on the vertices of $P$. This improves results by Alexandersson~\cite{AlexanderssonIDP} where it was assumed that $P$ has the integer decomposition property and $Q=\{0\}$.

Employing classical results from valuation theory, in Section~\ref{sec:charfunc} we first prove a recursion for indicator functions of dilated polytopes. Then, in Section~\ref{sec:multirec}, we apply these results to integer point transforms and prove Theorem~\ref{thm:reclatsum}. We recover Brion's Theorem in Section~\ref{sec:Brion} and by applying our results to Schur polynomials we disprove a conjecture of Alexandersson~\cite{Alexandersson} in Section~\ref{sec:Schur}.

\section{Characteristic functions and valuations}\label{sec:charfunc}
In this section we prove a linear recursion for indicator functions of integer dilates of a polytope $P$. Let $\mathcal{P}$ denote the set of polytopes in $\mathbb{R}^n$ and let $G$ be an abelian group. A \textbf{valuation} is a map $\varphi \colon \mathcal{P} \rightarrow G$ such that $\varphi (\emptyset )=0$ and
\[
\varphi (P\cup Q)=\varphi (P) +\varphi(Q)-\varphi (P\cap Q) \, ,
\]
for all $P,Q\in \mathcal{P}$ such that also $P\cup Q\in \mathcal{P}$. The volume, the number of lattice points inside a polytope and the integer point transform are examples of valuations. It was shown by Volland~\cite{Volland} that every valuation satisfies the inclusion-exclusion property. That is, for polytopes $P,P_1,\ldots,P_r$ such that $P=P_1\cup \cdots \cup P_r$
\[
\varphi (P) = \sum _{\emptyset\neq I\subseteq [r]} (-1)^{|I|+1}\varphi(P_I)\, ,
\]
where $P_I:=\bigcap _{i\in I}P_i$. Stronger even, it follows from a result of Groemer~\cite{Groemer}, that if $\sum \alpha _i \mathbf{1}_{P_i}=0$ for polytopes $P_1,\ldots,P_m$ and some $\alpha _1,\ldots,\alpha _m\in \mathbb{Z}$ then $\sum _i \alpha_i\varphi (P_i)=0$ where $\mathbf{1}_{P}$ denotes the indicator function for every polytope $P$. A function of the form $\sum \alpha _i \mathbf{1}_{P_i}$ is called a \textbf{polytopal simple function}. By Groemer's result, every valuation uniquely defines a homomorphism from the abelian group of polytopal simple functions to $G$, that is, every polytope can be identified with its indicator function. For valuations on lattice polytopes this was proved by McMullen~\cite{mcmullen2009valuations}. It is well-known that for every affine linear map $T\colon \mathbb{R}^n \rightarrow \mathbb{R}^m$
\begin{equation}\label{eq:pushforward}
\mathbf{1}_P\mapsto \mathbf{1}_{T(P)}
\end{equation}
defines a valuation (see, e.g., \cite[Chapter 8]{BarvinokConv}). Using this push forward map we obtain the following recursion on indicator functions.
\begin{thm}\label{thm:char}
Let $P$ be a polytope in $\mathbb{R}^n$ and $\lbrace v_1,\ldots ,v_r\rbrace = V(P)$ the vertex set of $P$. Then 
\begin{equation} \label{eqn:charrec}
\mathbf{1}_{(k+r)P}=\sum _{\emptyset \neq I\subseteq [r]} (-1)^{1+\left|I\right|} \mathbf{1}_{Q^{k,r}_I}
\end{equation}
for all $k\geq 0$ where $Q^{k,r}_I=(k+r-\left|I\right|)P+\sum _{i\in I} v_i$.
\end{thm}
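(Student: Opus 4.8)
The plan is to lift the identity~\eqref{eqn:charrec} to the standard simplex in $\R^r$, where all polytopes in sight are cut out by coordinate inequalities and the asserted identity reduces to a one-line inclusion--exclusion, and then to transport it down to $\R^n$ via the push forward valuation~\eqref{eq:pushforward} along the linear surjection $e_i\mapsto v_i$.

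Concretely, let $e_1,\dots,e_r$ be the standard basis of $\R^r$, let $\Delta:=\conv(e_1,\dots,e_r)$ be the standard simplex, and let $L\colon\R^r\to\R^n$ be the linear map with $L(e_i)=v_i$. Being linear, $L$ commutes with dilations, Minkowski sums and convex hulls, so $L(\Delta)=P$ and, for every $I\subseteq[r]$,
\[
L\Bigl((k+r-|I|)\Delta+\sum_{i\in I}e_i\Bigr)\;=\;(k+r-|I|)P+\sum_{i\in I}v_i,
\]
which equals $Q^{k,r}_I$ for $\emptyset\neq I$ and equals $(k+r)P$ for $I=\emptyset$. Accordingly, set
\[
B_I\;:=\;(k+r-|I|)\Delta+\sum_{i\in I}e_i\;=\;\set{y\in\R^r_{\ge 0}}{\textstyle\sum_{j=1}^{r}y_j=k+r,\ y_i\ge 1\text{ for all }i\in I}
\]
for $I\subseteq[r]$, so that $L(B_I)=Q^{k,r}_I$ for $\emptyset\neq I$ and $L(B_\emptyset)=(k+r)P$.

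The next step is to observe two elementary facts about the $B_I$, both immediate from the coordinate description above: $B_I=\bigcap_{i\in I}B_{\{i\}}$ for every $\emptyset\neq I\subseteq[r]$; and $B_\emptyset=\bigcup_{i=1}^{r}B_{\{i\}}$, where the inclusion $\supseteq$ is clear and $\subseteq$ holds because $\sum_{j=1}^{r}y_j=k+r\ge r$ is a sum of $r$ nonnegative reals and hence $y_i\ge 1$ for some $i$ --- the only point in the argument using $k\ge 0$. By inclusion--exclusion for indicator functions of a union of polytopes (a special case of the valuation property; see Volland's theorem) this yields the identity of polytopal simple functions on $\R^r$
\[
\mathbf{1}_{B_\emptyset}\;=\;\sum_{\emptyset\neq I\subseteq[r]}(-1)^{1+|I|}\,\mathbf{1}_{B_I},
\]
which may also be checked pointwise: both sides vanish off $B_\emptyset$, and at $y\in B_\emptyset$ with $J:=\{i:y_i\ge 1\}\neq\emptyset$ the right-hand side equals $-\sum_{\emptyset\neq I\subseteq J}(-1)^{|I|}=1$. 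Applying the push forward valuation~\eqref{eq:pushforward} for $L$---which, by Groemer's theorem, extends to a homomorphism of polytopal simple functions with $\mathbf{1}_R\mapsto\mathbf{1}_{L(R)}$---to this identity and using $L(B_I)=Q^{k,r}_I$, $L(B_\emptyset)=(k+r)P$, turns it into exactly~\eqref{eqn:charrec}.

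I expect the only real content to be the choice of this lift. Applying inclusion--exclusion directly in $\R^n$ to the translates $(k+r-1)P+v_i$, whose union is indeed $(k+r)P$, does not work: in general $\bigcap_{i\in I}\bigl((k+r-1)P+v_i\bigr)$ is strictly larger than $Q^{k,r}_I$, already for $P$ a square. Passing to $\Delta\subset\R^r$ repairs this, because there each $B_{\{i\}}$ is the intersection of the single coordinate halfspace $\{y_i\ge 1\}$ with a fixed dilate of $\Delta$, so the intersections $\bigcap_{i\in I}B_{\{i\}}$ collapse precisely onto the $B_I$; the push forward along $L$ then carries the clean identity back to $\R^n$. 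Everything after the choice of lift is routine.
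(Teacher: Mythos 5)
Your proof is correct and follows essentially the same route as the paper: reduce to the standard simplex, where the dilate $(k+r)\Delta$ decomposes via the coordinate halfspaces $\{y_i\ge 1\}$ and inclusion--exclusion gives the identity on the nose, then push forward along the linear map $e_i\mapsto v_i$ using~\eqref{eq:pushforward}. The pointwise verification and the remark on why inclusion--exclusion fails directly in $\R^n$ are nice additions but do not change the argument.
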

\begin{proof}
We first assume that $P$ is the $(d-1)$-dimensional standard simplex $\Delta _{d-1}=\lbrace x\in \mathbb{R}^d \colon x_1+\cdots +x_d = 1,  x_1,\ldots ,x_d \geq 0\rbrace$. Its $(k+d)$-th dilate is given by
\[
(k+d)\Delta _{d-1}=\lbrace x\in \mathbb{R}^d \colon x_1+\cdots +x_d = d+k,  x_1,\ldots ,x_d \geq 0\rbrace
\]
For all $I\subseteq [d]$, let
\[
P_I = (k+d)\Delta _{d-1}\cap \lbrace x\in \mathbb{R}^d \colon x_i \geq 1 \text{ for all } i\in I\rbrace \, .
\]
Then $P_I=\bigcap _{i\in I} P_{\lbrace i\rbrace }$ for all $\emptyset \neq I\subseteq [r]$. As in~\cite{Sam} we observe that $(k+d)\Delta _{d-1}=P_{\emptyset}=\bigcup _{i\in [d]} P_{\lbrace i\rbrace }$ for all $k\geq 0$. Therefore, by inclusion-exclusion,
\[
\mathbf{1}_{(k+d)\Delta _{d-1}}=\sum _{\emptyset \neq I\subseteq [d]} (-1)^{1+\left|I\right|} \mathbf{1}_{P_I}\, 
\]
and we finish the proof of this case by observing that $P_I=(k+d-\left|I\right|)\Delta _{d-1} + \sum _{i\in I} e_i$. 

For the general case, we recall that every polytope is an affine linear projection of a standard simplex and, thus, the claim follows by applying the push forward map~\eqref{eq:pushforward}.\qedhere

\end{proof}

For fixed $Q\in \mathcal{P}$, $\mathbf{1}_P\mapsto \mathbf{1}_{P+Q}$ defines a valuation (see, e.g.,~\cite{Schneider}) where $P+Q=\{p+q\colon p\in P, q\in Q\}$ is the \textbf{Minkowski sum}. The family of all polytopal simple functions forms an algebra where the multiplicative structure is given by the Minkowski sum of polytopes: $\mathbf{1}_{P}\star \mathbf{1}_{Q}:=\mathbf{1}_{P+Q}$ for all polytopes $P$ and $Q$. Another proof of Theorem~\ref{thm:char} can be obtained from the following result which was proved in~\cite{Lawrence06} . See also~\cite{pukhlikov1992finitely} for related material.

\begin{thm}[{\cite[Lemma 5]{Lawrence06}}]\label{thm:kp}
Let $P$ be a polytope and $\lbrace v_1,\ldots ,v_r\rbrace= V(P)$ the set of vertices of $P$. Then
\begin{equation}\label{eq:kp}
\left(\mathbf{1}_P -\mathbf{1}_{v_1}\right) \star \cdots \star \left(\mathbf{1}_P -\mathbf{1}_{v_r}\right) =0 \, .
\end{equation}
\end{thm}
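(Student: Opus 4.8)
The plan is to prove Theorem~\ref{thm:kp} by the same two-step scheme that underlies the proof of Theorem~\ref{thm:char}: first verify~\eqref{eq:kp} when $P$ is a standard simplex, and then transport it to an arbitrary polytope by a pushforward. Throughout I would work in the algebra of polytopal simple functions, using $\mathbf{1}_A\star\mathbf{1}_B=\mathbf{1}_{A+B}$, that $\mathbf{1}_{\{0\}}$ is the multiplicative unit, and the convention $\mathbf{1}_A^{\star m}=\mathbf{1}_{mA}$ with $0\cdot A:=\{0\}$. A first, purely formal, step is to expand the left-hand side of~\eqref{eq:kp} into $\prod_{i=1}^r(\mathbf{1}_P-\mathbf{1}_{v_i})=\sum_{I\subseteq[r]}(-1)^{|I|}\mathbf{1}_{(r-|I|)P+\sum_{i\in I}v_i}$, the summand $I=\emptyset$ being $\mathbf{1}_{rP}$. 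This alternating sum is exactly what one gets by collecting all terms of~\eqref{eqn:charrec} at $k=0$, so Theorems~\ref{thm:char} and~\ref{thm:kp} are equivalent; I would nonetheless prove~\eqref{eq:kp} directly so that the argument stands on its own.

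For the simplex case I would take $P=\Delta_{r-1}\subset\mathbb{R}^r$ with vertices $e_1,\ldots,e_r$ and argue as in the first half of the proof of Theorem~\ref{thm:char} specialised to $k=0$: for $\emptyset\neq I\subseteq[r]$ put $P_I=r\Delta_{r-1}\cap\{x:x_i\ge1\text{ for all }i\in I\}$, observe that $P_I=\bigcap_{i\in I}P_{\{i\}}$ and that $r\Delta_{r-1}=\bigcup_{i\in[r]}P_{\{i\}}$ (a point of $r\Delta_{r-1}$ has nonnegative coordinates summing to $r$, hence a coordinate that is at least $1$), apply inclusion--exclusion to the indicator functions, and identify $P_I$ with $(r-|I|)\Delta_{r-1}+\sum_{i\in I}e_i$ via the translation $x_i\mapsto x_i-1$ for $i\in I$. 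Rearranging the resulting identity and comparing it with the expansion above (for $P=\Delta_{r-1}$) gives $\prod_{i=1}^r(\mathbf{1}_{\Delta_{r-1}}-\mathbf{1}_{e_i})=0$.

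For a general polytope $P$ with $V(P)=\{v_1,\ldots,v_r\}$ I would then choose the \emph{linear} map $L\colon\mathbb{R}^r\to\mathbb{R}^n$ determined by $L(e_i)=v_i$, so that $L(\Delta_{r-1})=\conv(v_1,\ldots,v_r)=P$. Since $L$ is linear, $L(A+B)=L(A)+L(B)$, and hence the pushforward valuation $\varphi\colon\mathbf{1}_A\mapsto\mathbf{1}_{L(A)}$ --- well defined by~\eqref{eq:pushforward} --- is multiplicative on the generators $\mathbf{1}_A$ and fixes the unit; by bilinearity it extends to a unital homomorphism of algebras. Applying $\varphi$ to the simplex identity yields $\prod_{i=1}^r(\mathbf{1}_P-\mathbf{1}_{v_i})=\varphi\bigl(\prod_{i=1}^r(\mathbf{1}_{\Delta_{r-1}}-\mathbf{1}_{e_i})\bigr)=\varphi(0)=0$, which is~\eqref{eq:kp}.

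The step I expect to need the most care is the multiplicativity of $\varphi$: pushing forward along an \emph{affine} surjection is \emph{not} compatible with the Minkowski product --- an extra translation by the image of the origin appears --- so it is essential to realise $P$ as a \emph{linear} image of the simplex spanned by $e_1,\ldots,e_r$, rather than as an arbitrary affine projection as was done for Theorem~\ref{thm:char}. Beyond that, the argument is routine bookkeeping with the empty-product convention $0\cdot P=\{0\}$ and the $I=\emptyset$ term $\mathbf{1}_{rP}$.
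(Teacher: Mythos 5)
Your proof is correct, but note that the paper itself offers no proof of this statement: it is quoted from \cite[Lemma 5]{Lawrence06} and then used in the \emph{opposite} direction, to give a second proof of Theorem~\ref{thm:char}. What you have done is close that circle from the other side. You observe that the expansion of \eqref{eq:kp} is precisely the $k=0$ instance of \eqref{eqn:charrec}, and you establish that instance by specialising the paper's first proof of Theorem~\ref{thm:char} (cover $r\Delta_{r-1}$ by the sets $\{x_i\ge 1\}$, apply inclusion--exclusion, push forward). All the steps check out: the expansion uses only $\mathbf{1}_A\star\mathbf{1}_B=\mathbf{1}_{A+B}$ and $P+\cdots+P=mP$ for convex $P$; the simplex identity, including the degenerate term $P_{[r]}=\{(1,\ldots,1)\}=\{0\}+\sum_i e_i$, is exactly the paper's computation at $k=0$; and your point of extra care about the pushforward is well taken --- an affine map $x\mapsto Lx+b$ satisfies $T(A+B)=T(A)+T(B)-b$, so one really does need the \emph{linear} map $e_i\mapsto v_i$, which sends $m\Delta_{r-1}$ to $mP$ for every $m$ and is what the paper's phrase ``affine linear projection'' implicitly relies on as well. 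The trade-off relative to the paper's presentation is that Lawrence's lemma is there intended as an independent source for the recursion (hence the ``2nd proof''), whereas in your treatment \eqref{eq:kp} and Theorem~\ref{thm:char} are two packagings of one and the same inclusion--exclusion computation; that costs nothing logically, but it means your argument does not provide a genuinely second derivation of Theorem~\ref{thm:char}.
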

\begin{proof}[$2$nd proof of Theorem~\ref{thm:char}]
The proof follows from Theorem~\ref{thm:kp} by expanding equation~\eqref{eq:kp} and multiplying both sides with $\mathbf{1}_{kP}$.
\end{proof}

By the discussion above, Theorem~\ref{thm:char} is equivalent to the following.
\begin{thm}\label{thm:valuations}
Let $P$ be a polytope in $\mathbb{R}^n$ and $\lbrace v_1,\ldots ,v_r\rbrace = V(P)$ the vertex set of $P$, and $\varphi\colon\mathcal{P}\rightarrow G$ a valuation. Then
\begin{equation} \label{eqn:charrec}
\varphi((k+r)P)=\sum _{\emptyset \neq I\subseteq [r]} (-1)^{1+\left|I\right|} \varphi(Q^{k,r}_I)
\end{equation}
for all $k\geq 0$ where $Q^{k,r}_I=(k+r-\left|I\right|)P+\sum _{i\in I} v_i$.
\end{thm}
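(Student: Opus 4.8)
The plan is to deduce Theorem~\ref{thm:valuations} directly from Theorem~\ref{thm:char} using Groemer's extension theorem, which was recalled in the discussion preceding the statement. Recall that Groemer's result guarantees that every valuation $\varphi\colon\mathcal{P}\to G$ extends uniquely to a group homomorphism $\bar\varphi$ from the abelian group of polytopal simple functions to $G$; equivalently, any $\Z$-linear relation among indicator functions of polytopes is transported by $\varphi$ to the corresponding $\Z$-linear relation in $G$.

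First I would fix $P$, its vertex set $\V(P)=\{v_1,\dots,v_r\}$, and an integer $k\geq 0$, and recall from Theorem~\ref{thm:char} the identity
\[
\mathbf{1}_{(k+r)P}=\sum_{\emptyset\neq I\subseteq[r]}(-1)^{1+|I|}\,\mathbf{1}_{Q^{k,r}_I}
\]
of polytopal simple functions, where $Q^{k,r}_I=(k+r-|I|)P+\sum_{i\in I}v_i$. Rearranging, this says
\[
\mathbf{1}_{(k+r)P}-\sum_{\emptyset\neq I\subseteq[r]}(-1)^{1+|I|}\,\mathbf{1}_{Q^{k,r}_I}=0,
\]
which is a relation of the form $\sum_j\alpha_j\mathbf{1}_{P_j}=0$ with integer coefficients $\alpha_j$ and polytopes $P_j$ among $(k+r)P$ and the $Q^{k,r}_I$.

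Then I would apply Groemer's theorem (or equivalently McMullen's result for the lattice case): since $\varphi$ is a valuation, the induced homomorphism $\bar\varphi$ annihilates the left-hand side, i.e.\ $\sum_j\alpha_j\varphi(P_j)=0$. Explicitly this yields
\[
\varphi\bigl((k+r)P\bigr)=\sum_{\emptyset\neq I\subseteq[r]}(-1)^{1+|I|}\,\varphi\bigl(Q^{k,r}_I\bigr),
\]
which is precisely~\eqref{eqn:charrec}. Since $k\geq 0$ was arbitrary, this completes the argument. In fact, the implication runs both ways: specializing $\varphi$ to the identity-like valuation $P\mapsto\mathbf{1}_P$ (which is a valuation by Groemer's theorem) recovers Theorem~\ref{thm:char}, so the two statements are genuinely equivalent, as asserted.

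There is essentially no obstacle here: the entire content has already been done in Theorem~\ref{thm:char}, and the only ingredient added is the standard fact, attributed above to Groemer and McMullen, that valuations respect linear identities among indicator functions. The one point deserving a word of care is making sure the coefficients in the identity of Theorem~\ref{thm:char} are integers (they are, being $\pm1$) so that Groemer's integral version applies verbatim; no real-coefficient refinement is needed.
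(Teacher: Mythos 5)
Your argument is correct and is exactly the route the paper takes: the paper states that Theorem~\ref{thm:valuations} is equivalent to Theorem~\ref{thm:char} ``by the discussion above,'' meaning precisely the Groemer/McMullen fact that a valuation transports any integer linear relation among indicator functions to the corresponding relation in $G$. You have merely written out explicitly what the paper leaves implicit, including the (correct) observation that the coefficients $\pm1$ make the integral version of Groemer's theorem apply directly.
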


\section{A multivariate recursion}\label{sec:multirec}
A sequence $\mathbf{a}=\lbrace a_k\rbrace _{k\geq 0}$ of elements in $\mathbb{C}(x_1,\ldots, x_n)$ satisfies a \textbf{linear recursion} of order $d\geq 1$ if there are $c_1,\ldots, c_d\in \mathbb{C}(x_1,\ldots, x_n )$, $c_d\neq 0$, such that
\[
a_{k}=\sum_{j=1}^{d} c_j a_{k-j}
\]
for all $k\geq d$. The corresponding \textbf{characteristic polynomial} $\chi_{\mathbf{c}}$ is defined as $X^d -\sum _{j=1}^d c_j X^{d-j} \in \mathbb{C}(x_1,\ldots, x_n)[X]$. The polynomial $\chi _\mathbf{c}$ is called \textbf{minimal} if for every vector $\mathbf{c}'=(c_1',\ldots, c_{d'}')$  corresponding to a linear recursion of $\mathbf{a}$ we have $\chi _{\mathbf{c}}|\chi _{\mathbf{c}'}$. Since $\mathbb{C}(x_1,\ldots, x_n)[X]$ is a principal ideal domain a uniquely determined minimal polynomial exists.

We are now ready to proof Theorem~\ref{thm:reclatsum}.
\begin{proof}[Proof of Theorem~\ref{thm:reclatsum}]
Let $r=\left|V(P)\right|$ be the number of vertices of $P$.
Since the maps $P\mapsto P+Q$  and also $P\mapsto \sigma _P (x)$ define valuations, by Theorem~\ref{thm:valuations}
\begin{eqnarray*}
\sigma_{(k+r)P+Q} (\mathbf{x})&=&\sum _{\emptyset \neq I\subseteq [r]} (-1)^{1+\left|I\right|} \sigma_{(k+r-\left|I\right|)P+\sum _{i\in I}v_i +Q}(\mathbf{x})\\
&=&\sum _{\emptyset \neq I\subseteq [r]} (-1)^{1+\left|I\right|} \mathbf{x}^{\sum _{i\in I} v_i}\sigma_{(k+r-\left|I\right|)P +Q}(\mathbf{x}) \, ,
\end{eqnarray*}
where the last equation follows by observing that $\sigma _{P+v}(\mathbf{x})=\mathbf{x}^v \sigma _{P}(\mathbf{x})$ for all $v\in \mathbb{Z}^n$. We observe that $\chi_{P;Q}$ is the characteristic polynomial of this linear recursion.
  
Now let $Q$ be a lattice polytope and suppose that $\chi _{P;Q}$ is not minimal. Then, for some vertex $\mathbf{u}$ of $P$, $\lbrace \sigma _{kP+Q} (\mathbf{x})\rbrace _{k\geq 0}$ satisfies a linear recursion with characteristic polynomial $\prod _{\mathbf{v}\in \V(P)\setminus \lbrace \mathbf{u}\rbrace } (X-\mathbf{x}^{\mathbf{v}})$. That is
\[
\sigma_{(k+r)P+Q}(\mathbf{x})+\sum_{j=1}^{\left|\V(P)\right|-1} (-1)^j e_j(\lbrace \mathbf{x}^\mathbf{v} \colon \mathbf{v}\in \V(P)\setminus \{\mathbf{u}\}\rbrace) \sigma_{(k+r-j)P+Q}(\mathbf{x}) =0
\]
where $e_j$ denotes the $j$-th elementary symmetric polynomial in $\left|\V(P)\right|-1$ variables. Now let $\mathbf{v}$ be a vertex  of $Q$ such that $\mathbf{u}+\mathbf{v}$ is a vertex of $P+Q$. Then $(k+r)\mathbf{u}+\mathbf{v}$ is a vertex of $(k+r)P+Q$ and thus $x^{(k+r)\mathbf{u}+\mathbf{v}}$ appears as a summand in $\sigma_{(k+r)P+Q}(x)$. However, it does not appear in $e_j(\lbrace \mathbf{x}^{\mathbf{v}} \colon \mathbf{v}\in V(P)\setminus \{\mathbf{u}\}\rbrace) \sigma_{(k+r-j)P+Q}(x)$ for any $1\leq j\leq\left|\V(P)\right|-1$. To see that, it suffices to argue that $(k+r)\mathbf{u}+\mathbf{v}$ is not contained in  $(k+r-j)P+Q+\sum _{l=1}^jv_l$ for any choice of $v_1,\ldots,v_j\in V(P)\setminus \{\mathbf{u}\}$. For that, let $\ell \colon \mathbb{R}^n\rightarrow \mathbb{R}$ be a linear functional such that $\ell (\mathbf {u})>\ell (p)$ for all $p\neq \mathbf{u}$ in $P$ and $\ell (\mathbf {v})>\ell (q)$ for all $q\neq \mathbf{v}$ in $Q$. Then $\ell((k+r-j)p+q+\sum _{l=1}^jv_l)=(k+r-j)\ell (p)+\ell(q)+\sum _{l=1}^j\ell(v_l)<(k+r-j)\ell(\mathbf{u})+\ell(\mathbf{v})+j\ell(\mathbf{u})=\ell((k+r)\mathbf{u}+\mathbf{v})$ for all $p\in P$ and $q\in Q$ and the conclusion follows.
\end{proof}

Every linear map $f\colon \mathbb{R}^n\rightarrow \mathbb{R}^l$ with the property that $f(\mathbb{Z}^n)\subseteq \mathbb{Z}^l$ induces an algebra homomorphism
\begin{eqnarray*}
\bar f\colon \mathbb{C}\left[x_1^{\pm 1},\ldots ,x_n^{\pm 1}\right]  &  \rightarrow  &  \mathbb{C}\left[x_1^{\pm 1},\ldots, x_l^{\pm 1}\right]\\
\mathbf{x}^\mathbf{m}  &  \mapsto  &  \mathbf{x}^{f(\mathbf{m})}
\end{eqnarray*}
As a consequence of Theorem~\ref{thm:reclatsum} we therefore obtain the following.
\begin{prop}\label{prop:projection}
Let $Q$ be a polytope in $\mathbb{R}^n$ and $P$ be a lattice polytope, and let $f\colon \mathbb{R}^n \rightarrow \mathbb{R}^l$ a linear map such that  $f(\mathbb{Z}^n)\subseteq \mathbb{Z}^l$. Then $\lbrace \bar f(\sigma _{kP+Q}(\mathbf{x}))\rbrace _{k\geq 0}$ satisfies a linear recursion with characteristic polynomial 
\[
\chi _{P;Q}^f(X):=\prod _{\mathbf{v}\in V(P)} (X-x^{f(\mathbf{v})}).
\]
\end{prop}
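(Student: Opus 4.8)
The plan is to deduce Proposition~\ref{prop:projection} directly from Theorem~\ref{thm:reclatsum} by transporting the linear recursion through the algebra homomorphism $\bar f$. First I would recall that $\bar f\colon \mathbb{C}[x_1^{\pm 1},\ldots,x_n^{\pm 1}]\to\mathbb{C}[x_1^{\pm 1},\ldots,x_l^{\pm 1}]$ is in particular additive, so applying $\bar f$ to the recursion of Theorem~\ref{thm:reclatsum} gives
\begin{eqnarray*}
\bar f\bigl(\sigma_{(k+r)P+Q}(\mathbf{x})\bigr)&=&\sum_{\emptyset\neq I\subseteq[r]}(-1)^{1+|I|}\,\bar f\Bigl(\mathbf{x}^{\sum_{i\in I}v_i}\sigma_{(k+r-|I|)P+Q}(\mathbf{x})\Bigr),
\end{eqnarray*}
where $r=|V(P)|$ and $V(P)=\{v_1,\ldots,v_r\}$. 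Since each $v_i\in\mathbb{Z}^n$ and $f(\mathbb{Z}^n)\subseteq\mathbb{Z}^l$, the multiplicativity of $\bar f$ yields $\bar f(\mathbf{x}^{\sum_{i\in I}v_i}\cdot g)=\bar f(\mathbf{x}^{\sum_{i\in I}v_i})\cdot\bar f(g)=x^{f(\sum_{i\in I}v_i)}\bar f(g)=x^{\sum_{i\in I}f(v_i)}\bar f(g)$, using that $f$ is linear. Writing $a_k:=\bar f(\sigma_{kP+Q}(\mathbf{x}))$, this shows the sequence $\{a_k\}_{k\geq0}$ satisfies
\[
a_{k+r}=\sum_{\emptyset\neq I\subseteq[r]}(-1)^{1+|I|}\,x^{\sum_{i\in I}f(v_i)}\,a_{k+r-|I|}
\]
for all $k\geq0$. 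Expanding $\prod_{\mathbf v\in V(P)}(X-x^{f(\mathbf v)})=\prod_{i=1}^r(X-x^{f(v_i)})$ and comparing coefficients via the elementary symmetric polynomials in $x^{f(v_1)},\ldots,x^{f(v_r)}$ identifies this as precisely the linear recursion with characteristic polynomial $\chi_{P;Q}^f(X)=\prod_{\mathbf v\in V(P)}(X-x^{f(\mathbf v)})$, as claimed.

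The only genuine subtlety is a degenerate-case check: the definition of linear recursion in Section~\ref{sec:multirec} requires the top coefficient $c_d\neq0$, i.e.\ the product $\prod_{i=1}^r x^{f(v_i)}$ must be nonzero, which is automatic since it is a monomial. One should also note that the vertices $f(v_1),\ldots,f(v_r)$ need not be distinct after projection, so $\chi_{P;Q}^f$ may have repeated roots and need not be minimal — but Proposition~\ref{prop:projection} only claims it is \emph{a} characteristic polynomial of \emph{a} linear recursion, so no minimality argument is needed here. Thus the main (and essentially only) point is the interchange of $\bar f$ with the Minkowski-translation monomials, which is immediate from $\bar f$ being an algebra homomorphism compatible with the lattice via $f(\mathbb{Z}^n)\subseteq\mathbb{Z}^l$; there is no real obstacle, and the proof is a one-line application of Theorem~\ref{thm:reclatsum}.
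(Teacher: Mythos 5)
Your proof is correct and is exactly the argument the paper intends: the paper states Proposition~\ref{prop:projection} as an immediate consequence of Theorem~\ref{thm:reclatsum} obtained by applying the algebra homomorphism $\bar f$ to the recursion, which is precisely what you do (your remarks about the nonvanishing leading coefficient and possibly repeated roots after projection are sensible but not needed). No gaps.
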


The following two examples show that the minimality of a characteristic polynomial is not necessarily preserved under affine transformations or taking Minkowski sums.

\begin{example}
If $Q$ in Theorem~\ref{thm:reclatsum} is not a lattice polytope then $\chi _{P;Q}$ is not necessarily minimal. A counterexample is given by the lattice segment $P=[0,1]$ and the point $Q=\lbrace (0.5,0.5)\rbrace$ in $\R^2$. In that case $\sigma_{kP+Q}\equiv 0$ is constant.
\end{example}

\begin{example}[Ehrhart polynomials]
For $f\colon \R^n\rightarrow \R^0$ and $f\equiv 0$ we obtain $\bar f(\sigma _{kP}(\mathbf{x}))=|kP\cap \mathbb{Z}^n|$ and thus recover the Ehrhart function counting lattice points in integer dilates of $P$. If $P$ is a lattice polytope then this function is known to agree with a polynomial of degree $\dim P$~\cite{ehrhartpolynomial}. Therefore the order of the minimal polynomial of the sequence is $\dim P$ as was demonstrated in~\cite{Sam} and is thus in general smaller than $\left|\V(P)\right|$.
\end{example}
These examples motivate the following question.
\begin{quest}
What are necessary and sufficient conditions on $Q$ and on $f$ that guarantee that $\chi _{P;Q}^f$ is minimal?
\end{quest}

\section{Brion's Theorem}\label{sec:Brion}
In this section we provide a proof of Brion's Theorem using the recursion given in Theorem~\ref{thm:reclatsum}. For a polytope $P\subseteq \R^n$ and a vertex $\mathbf{v}$ of $P$ the \textbf{tangent cone} $\mathcal{K}_{\mathbf{v}}$ is defined as $\left\{\mathbf{v}+\mathbf{w}\colon \mathbf{v}+\varepsilon \mathbf{w}\in P \text{ for } 0<\varepsilon \ll 1\right\}$. If the polytope $P$ has rational edge directions, in particular, if it is a lattice polytope, then the  integer point transform of $\mathcal{K}_{\mathbf{v}}$ is a rational function. 
\begin{thm}[Brion's Theorem \cite{brion1988points}]\label{thm:brion}
Let $P$ be a lattice polytope. Then
\[
\sigma _P (\mathbf{x}) =\sum _{\mathbf{v} \in \V(P)}\sigma _{\mathcal{K}_\mathbf{v}} (\mathbf{x})
\]
as rational functions.
\end{thm}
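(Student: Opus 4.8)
The plan is to run the recursion of Theorem~\ref{thm:reclatsum} with $Q=\{\mathbf 0\}$ simultaneously for $\sigma_{kP}(\mathbf x)$ and for the sum $\sum_{\mathbf v\in\V(P)}\sigma_{\mathcal K_{\mathbf v}}(\mathbf x)$ of tangent-cone transforms, and show that both sides of the asserted identity are the unique solution of one and the same recursion with one and the same initial data, hence agree as rational functions. Concretely, for a lattice polytope $P$ with $\V(P)=\{v_1,\dots,v_r\}$, Theorem~\ref{thm:reclatsum} gives that $\{\sigma_{kP}(\mathbf x)\}_{k\ge 0}$ satisfies the linear recursion with characteristic polynomial $\chi_P(X)=\prod_{i=1}^r(X-\mathbf x^{v_i})$. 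The first step is to establish that the sequence $\{\sum_{\mathbf v}\sigma_{\mathcal K_{\mathbf v}}(\mathbf x)\}_{k\ge 0}$ — which is in fact a \emph{constant} sequence in $k$ — also satisfies this recursion, which amounts to the identity $\sum_{j=0}^{r}(-1)^j e_j(\mathbf x^{v_1},\dots,\mathbf x^{v_r})=\prod_{i=1}^r(1-\mathbf x^{v_i})$ being $0$... but this is generically nonzero, so a constant sequence does \emph{not} satisfy the homogeneous recursion. This tells me the right formulation is to compare $\sigma_{kP}$ not against a constant but against $\sum_{\mathbf v}\sigma_{k\mathcal K_{\mathbf v}}(\mathbf x)$, noting that $k\mathcal K_{\mathbf v}=\mathcal K_{k\mathbf v}$ is the tangent cone of $kP$ at its vertex $k\mathbf v$, and $\sigma_{k\mathcal K_{\mathbf v}}(\mathbf x)=\mathbf x^{(k-1)\mathbf v}\sigma_{\mathcal K_{\mathbf v}}(\mathbf x)$ since translating a cone by a lattice vector multiplies its transform by the corresponding monomial; with $Q=-\mathbf v$ absorbed appropriately, each $\{\sigma_{k\mathcal K_{\mathbf v}}(\mathbf x)\}_k$ is a geometric sequence with ratio $\mathbf x^{\mathbf v}$, hence satisfies the linear recursion with characteristic polynomial $X-\mathbf x^{v_i}$, and therefore the sum over $\mathbf v$ satisfies the recursion with characteristic polynomial $\chi_P(X)=\prod_i(X-\mathbf x^{v_i})$.

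So the structure is: (i) show $\{\sigma_{kP}(\mathbf x)\}_{k\ge 0}$ satisfies the recursion with characteristic polynomial $\chi_P$ — this is Theorem~\ref{thm:reclatsum} with $Q=\{\mathbf 0\}$; (ii) show $\{\sum_{\mathbf v\in\V(P)}\sigma_{k\mathcal K_{\mathbf v}}(\mathbf x)\}_{k\ge 0}$ satisfies the same recursion, via the geometric-sequence observation above together with the fact that the characteristic polynomial of a sum of sequences divides the product (equivalently, lcm) of the individual characteristic polynomials, here $\prod_i(X-\mathbf x^{v_i})=\chi_P(X)$; (iii) conclude that the difference sequence $D_k:=\sigma_{kP}(\mathbf x)-\sum_{\mathbf v}\sigma_{k\mathcal K_{\mathbf v}}(\mathbf x)$ satisfies the same order-$r$ recursion, and so $D_k\equiv 0$ for all $k$ once we check $D_0=D_1=\cdots=D_{r-1}=0$; (iv) reduce these $r$ initial equalities to the single case $k=1$, i.e.\ $\sigma_P(\mathbf x)=\sum_{\mathbf v}\sigma_{\mathcal K_{\mathbf v}}(\mathbf x)$, by observing that $D_0=0$ trivially ($0\cdot P$ is a point, and the ``$0$-th dilate'' of a cone issue must be handled by indexing from $k=1$) and that $D_k=0$ for a range of small $k$ forces the identity — or, cleaner, rephrase the whole argument so that the recursion is run on $\{\sigma_{(k+1)P}\}$ versus $\{\sum_{\mathbf v}\sigma_{(k+1)\mathcal K_{\mathbf v}}\}$ and the needed initial data is $k=0,\dots,r-1$, each of which I must still verify directly.

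The genuine obstacle is step (iv): a linear recursion of order $r$ determines a sequence only after $r$ initial terms are pinned down, so reducing Brion to the recursion does \emph{not} by itself prove the $k=1$ base case — I still need an independent argument that $\sigma_{P}(\mathbf x)=\sum_{\mathbf v}\sigma_{\mathcal K_{\mathbf v}}(\mathbf x)$, or that the $r$ initial values of $D$ vanish. The way out, and the key idea, is an \textbf{asymptotic / specialization argument}: pick a generic vector $\boldsymbol\xi$ and substitute $\mathbf x=t^{\boldsymbol\xi}$ (i.e.\ $x_i=t^{\xi_i}$) so that the linear functional $\ell(\mathbf m)=\langle\boldsymbol\xi,\mathbf m\rangle$ is injective on the relevant lattice points and, say, negative on all edge directions of $P$ from a chosen vertex, making each $\sigma_{\mathcal K_{\mathbf v}}(t^{\boldsymbol\xi})$ a convergent power series in $t$ near $t=1$ or near $t=0$ after a shift; then for the finite sum $\sigma_{kP}$ one compares coefficient-by-coefficient. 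Alternatively — and this is probably what the paper does — one lets $k\to\infty$: after normalizing by $\mathbf x^{-k\mathbf v_0}$ for the $\ell$-maximal vertex $\mathbf v_0$, the normalized $\sigma_{kP}$ converges (coefficientwise, in the appropriate completion) to $\sigma_{\mathcal K_{\mathbf v_0}}$, while in the expression $D_k$ one can solve the order-$r$ recursion explicitly: $D_k=\sum_i A_i\,\mathbf x^{k v_i}$ for rational-function coefficients $A_i=A_i(\mathbf x)$ independent of $k$ (Vandermonde, valid since the $\mathbf x^{v_i}$ are distinct as rational functions), and then a growth/ordering argument under the specialization $\mathbf x=t^{\boldsymbol\xi}$ forces every $A_i=0$, one vertex at a time — exactly the same flavor of argument used to prove minimality of $\chi_{P;Q}$ in the proof of Theorem~\ref{thm:reclatsum}. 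Closing this step rigorously (choosing the functional, controlling convergence in $\mathbb C(x_1,\dots,x_n)$ via a $t$-adic or coefficientwise limit, and peeling off the leading vertex) is where essentially all the work lies; the rest is bookkeeping with the recursion.
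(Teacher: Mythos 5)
Your final plan---solve the order-$r$ recursion from Theorem~\ref{thm:reclatsum} (with $Q=\{\mathbf 0\}$) explicitly as $\sigma_{kP}(\mathbf{x})=\sum_{\mathbf{v}\in\V(P)}c_{\mathbf{v}}\mathbf{x}^{k\mathbf{v}}$ using the distinctness of the roots $\mathbf{x}^{\mathbf{v}}$, then identify each $c_{\mathbf{v}}$ with the corresponding tangent-cone transform by normalizing and letting $k\to\infty$---is exactly the paper's proof, and your earlier detour through comparing two sequences with the same initial data is a false start that you correctly abandon. The convergence step you leave open is closed in the paper by Lemma~\ref{lem:analytic}: $\sigma_{k(P-\mathbf{w})}(\mathbf{x})$ converges absolutely to $\sigma_{\tilde{\mathcal{K}}_0}(\mathbf{x})$ on the nonempty open set $\{\mathbf{x}\colon |\mathbf{x}^{\mathbf{v}-\mathbf{w}}|<1 \text{ for all }\mathbf{v}\neq\mathbf{w}\}$, where the normalized right-hand side tends to $c_{\mathbf{w}}$, and two rational functions agreeing on a nonempty open set coincide.
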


The following is an immediate consequence of~\cite[Lemma 13.5.]{barvinokinteger}

\begin{lem}{\cite{barvinokinteger}}\label{lem:analytic}
Let $u_1,\ldots, u_k\in \mathbb{Z}^n$ such that the cone $\mathcal{K}:=\cone (u_1,\ldots, u_k)$ generated by $u_1,\ldots, u_k$ is pointed. Then
\[
\sigma _{\mathcal{K}} (\mathbf{x})=\sum _{\mathbf{m}\in \mathcal{K} \cap \mathbb{Z}^n} \mathbf{x}^\mathbf{m}
\]
is a rational function and converges absolutely for all $\mathbf{x}$ in $\lbrace \mathbf{x}\in \mathbb{C}^n \colon |\mathbf{x}^{u_i}|<1 \text{ for } i=1,\ldots , k \rbrace$.
\end{lem}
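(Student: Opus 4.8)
The plan is to reduce to simplicial cones and then sum an explicit geometric series. First I would recall that any pointed rational cone $\mathcal{K}=\cone(u_1,\ldots,u_k)$ admits a triangulation into simplicial cones $\mathcal{K}_1,\ldots,\mathcal{K}_s$, each generated by a linearly independent subset of $\{u_1,\ldots,u_k\}$ and meeting only along common faces. Since summing $\mathbf{x}^{\mathbf{m}}$ over lattice points is the restriction of the integer point transform, inclusion--exclusion over this triangulation expresses $\sigma_{\mathcal{K}}(\mathbf{x})$ as a $\mathbb{Z}$-linear combination of the $\sigma_{\mathcal{K}_j}(\mathbf{x})$ and the transforms of the lower-dimensional intersection cones. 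Each such piece is again a pointed simplicial cone generated by a subset of the $u_i$, so it suffices to prove the lemma for a simplicial cone $\mathcal{C}=\cone(w_1,\ldots,w_d)$ with $w_1,\ldots,w_d\in\{u_1,\ldots,u_k\}$ linearly independent.

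For such a simplicial cone I would use the fundamental parallelepiped $\Pi=\left\{\sum_j t_j w_j : 0\le t_j<1\right\}$. Every lattice point of $\mathcal{C}$ has a unique expression $\mathbf{p}+\sum_j n_j w_j$ with $\mathbf{p}\in\Pi\cap\mathbb{Z}^n$ and $n_j\in\mathbb{Z}_{\ge0}$, and since $\Pi$ is bounded the set $\Pi\cap\mathbb{Z}^n$ is finite. Summing the geometric series in each $n_j$ then yields
\[
\sigma_{\mathcal{C}}(\mathbf{x})=\left(\sum_{\mathbf{p}\in\Pi\cap\mathbb{Z}^n}\mathbf{x}^{\mathbf{p}}\right)\prod_{j=1}^d\frac{1}{1-\mathbf{x}^{w_j}},
\]
which is a rational function. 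Because each $w_j$ is one of the $u_i$, the condition $|\mathbf{x}^{u_i}|<1$ forces $|\mathbf{x}^{w_j}|<1$, so each geometric series converges absolutely; the finite prefactor over $\Pi\cap\mathbb{Z}^n$ then gives absolute convergence of $\sigma_{\mathcal{C}}(\mathbf{x})$ on the stated domain, with value equal to the displayed rational function.

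To finish, I would combine the pieces. Carath\'eodory's theorem gives the inclusion $\mathcal{K}\cap\mathbb{Z}^n\subseteq\bigcup_j(\mathcal{K}_j\cap\mathbb{Z}^n)$, whence
\[
\sum_{\mathbf{m}\in\mathcal{K}\cap\mathbb{Z}^n}|\mathbf{x}^{\mathbf{m}}|\le\sum_{j}\ \sum_{\mathbf{m}\in\mathcal{K}_j\cap\mathbb{Z}^n}|\mathbf{x}^{\mathbf{m}}|<\infty
\]
on the domain $\{\mathbf{x}\in\mathbb{C}^n : |\mathbf{x}^{u_i}|<1\}$, establishing absolute convergence of $\sigma_{\mathcal{K}}$ itself. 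This absolute convergence legitimizes the rearrangement underlying the inclusion--exclusion identity, so the value of $\sigma_{\mathcal{K}}(\mathbf{x})$ equals the corresponding $\mathbb{Z}$-linear combination of rational functions and is therefore rational. The main obstacle I anticipate is bookkeeping: ensuring that the triangulation and all of its lower-dimensional intersection cones remain pointed and generated within $\{u_1,\ldots,u_k\}$ so that every term converges on the same domain, and checking that inclusion--exclusion for the unbounded cones is justified by the valuation property together with the absolute convergence just established, rather than only formally.
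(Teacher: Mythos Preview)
The paper does not give its own proof of this lemma: it is quoted as ``an immediate consequence of \cite[Lemma~13.5]{barvinokinteger}'' and left at that. So there is nothing to compare your argument against in the paper itself.

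That said, your argument is essentially the standard proof one finds in Barvinok's text or in Beck--Robins: triangulate the pointed cone using subsets of the given generators, write each simplicial piece via its fundamental parallelepiped as a finite sum times a product of geometric series $\prod_j(1-\mathbf{x}^{w_j})^{-1}$, observe that $|\mathbf{x}^{w_j}|<1$ on the stated domain since each $w_j$ is among the $u_i$, and assemble by inclusion--exclusion. Your bookkeeping concerns are legitimate but routine: faces of the simplicial pieces are again simplicial cones on subsets of the same generators, hence pointed and convergent on the same domain, and the absolute convergence you establish first for the covering $\bigcup_j\mathcal{K}_j$ does justify the rearrangement. One small quibble: the inclusion $\mathcal{K}\cap\mathbb{Z}^n\subseteq\bigcup_j(\mathcal{K}_j\cap\mathbb{Z}^n)$ is simply because the $\mathcal{K}_j$ cover $\mathcal{K}$; invoking Carath\'eodory there is unnecessary (and not quite the right citation). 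Otherwise the proof is sound.
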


A further ingredient for our proof of Brion's Theorem is the following well-known result (see, e.g., \cite[Chapter 5]{spiegel1971schaum}).
\begin{lem}\label{lem:expl}
Let $\lbrace a_n\rbrace _{n\in \mathbb{N}}$ be a sequence of elements of a field $K$ that satisfy a linear recursion of order $d$ with characteristic polynomial
\[
\prod _{i=1} ^d (X-r_i).
\]
If all roots $r_1,\ldots, r_d$ are distinct then there are $\alpha _1, \ldots, \alpha _d \in K$ such that
\[
a_n =\sum _{i=1} ^d \alpha _i r_i ^n
\]
for all $n \in \mathbb{N}$.
\end{lem}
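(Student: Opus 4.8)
The plan is to prove this by the standard ansatz for linear recursions, writing the sequence in terms of a Vandermonde system. Fix the field $K$ and suppose $\{a_n\}_{n\geq 0}$ satisfies $a_n = \sum_{j=1}^d c_j a_{n-j}$ for $n\geq d$, where $X^d - \sum_{j=1}^d c_j X^{d-j} = \prod_{i=1}^d (X-r_i)$ with the $r_i$ pairwise distinct. The key observation is that for each fixed $i$, the sequence $\{r_i^n\}_{n\geq 0}$ itself satisfies the same recursion: indeed $r_i^n - \sum_{j=1}^d c_j r_i^{n-j} = r_i^{n-d}\left(r_i^d - \sum_{j=1}^d c_j r_i^{d-j}\right) = 0$ because $r_i$ is a root of the characteristic polynomial. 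Hence any linear combination $\sum_i \alpha_i r_i^n$ satisfies the recursion as well, and since a solution of a linear recursion of order $d$ is completely determined by its first $d$ values $a_0, \ldots, a_{d-1}$, it suffices to find $\alpha_1,\ldots,\alpha_d \in K$ so that $\sum_{i=1}^d \alpha_i r_i^m = a_m$ for $m = 0, 1, \ldots, d-1$.

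The second step is to solve this finite linear system. Its coefficient matrix is the Vandermonde matrix $(r_i^m)_{0 \le m \le d-1,\, 1 \le i \le d}$, whose determinant is $\prod_{1 \le i < j \le d} (r_j - r_i)$. Since the $r_i$ are assumed distinct, this determinant is a nonzero element of $K$, so the matrix is invertible over $K$ and a unique solution vector $(\alpha_1,\ldots,\alpha_d)$ exists. With these $\alpha_i$, the two sequences $\{a_n\}$ and $\{\sum_i \alpha_i r_i^n\}$ both satisfy the order-$d$ recursion and agree on the first $d$ terms, hence agree for all $n$ by induction on $n$ (the recursion expresses $a_n$ in terms of the previous $d$ values, which by the inductive hypothesis already coincide). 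This gives $a_n = \sum_{i=1}^d \alpha_i r_i^n$ for all $n \in \mathbb{N}$.

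I do not expect any genuine obstacle here; the only point requiring the distinctness hypothesis is the invertibility of the Vandermonde matrix, and this is exactly where it enters. One minor care-point is the indexing convention: the excerpt writes sequences indexed by $n\in\mathbb{N}$, and depending on whether $\mathbb{N}$ includes $0$ one should phrase the base case of the induction as the agreement on $a_0,\ldots,a_{d-1}$ (or $a_1,\ldots,a_d$); either way the argument is identical. If one wanted a slicker presentation one could instead invoke the theory of the companion matrix: $\{a_n\}$ is recovered from iterating a matrix whose characteristic polynomial is $\prod_i(X-r_i)$, and diagonalizability of that matrix over $K$ (guaranteed by distinct eigenvalues) yields the closed form; but the direct Vandermonde argument above is self-contained and elementary, so that is the route I would take.
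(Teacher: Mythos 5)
Your proof is correct: the observation that each $\{r_i^n\}$ satisfies the recursion, the invertibility of the Vandermonde matrix over $K$ (which is exactly where distinctness is used), and the induction from agreement on the first $d$ terms together give the claim with no gaps. The paper does not prove this lemma at all --- it cites it as a well-known fact from a standard reference --- and your Vandermonde argument is precisely the standard proof being invoked there, so there is nothing to reconcile.
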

\begin{proof}[Proof of Theorem \ref{thm:brion}]
By Theorem \ref{thm:reclatsum} and Lemma \ref{lem:expl} there are $c_\mathbf{v} \in \mathbb{C}(x_1,\ldots , x_n)$ for all $\mathbf{v}\in \V(P)$ such 
\begin{equation}\label{eqn:converg}
\sigma _{kP}(\mathbf{x})=\sum _{\mathbf{v}\in \V(P)} c_\mathbf{v} \mathbf{x}^{k\mathbf{v}}
\end{equation}
for all $k\geq 0$. Our goal is to show that $c_\mathbf{w} \cdot \mathbf{x}^{\mathbf{w}}=\sigma _{\mathcal{K}_\mathbf{w}}(\mathbf{x})$ as rational functions for all $\mathbf{w}\in \V(P)$, or, equivalently, that $c_\mathbf{w}$ equals the integer point transform of the tangent cone $\tilde{\mathcal{K}}_0$ of the vertex $0$ of the translated polytope $P-\mathbf{w}$. Equation \eqref{eqn:converg} is equivalent to $\sigma _{k(P-\mathbf{w})}(\mathbf{x})=\sum _{\mathbf{v}\in \V(P)} c_\mathbf{v} \mathbf{x}^{k(\mathbf{v}-\mathbf{w})}$. As $k$ goes to infinity $\sigma _{k(P-\mathbf{w})}(\mathbf{x})$ converges absolutely to $\sigma _{\tilde{\mathcal{K}}_0}(\mathbf{x})$ on $W_{\tilde{\mathcal{K}}_0}=\lbrace \mathbf{x}\in \mathbb{C}^n \colon |\mathbf{x}^{\mathbf{v}-\mathbf{w}}|<1 \text{ for all } \mathbf{v}\in V(P) \setminus \lbrace \mathbf{w}\rbrace\rbrace$  by Lemma \ref{lem:analytic}.  On the other hand, $\sum _{\mathbf{v}\in \V(P)} c_\mathbf{v} \mathbf{x}^{k(\mathbf{v}-\mathbf{w})}$ converges to $c_\mathbf{w}$. Thus $\sigma _{\tilde{\mathcal{K}}_0}(x)$ and $c_\mathbf{w}$ coincide on $W_{\tilde{\mathcal{K}}_0}$ and  are therefore the same as rational functions.
\end{proof}

\section{Schur polynomials}\label{sec:Schur}
In this section we apply our results to Schur polynomials.

A \textbf{partition} is a vector $\pmb{\lambda }=(\lambda _1 \geq \lambda _2\geq \ldots \geq \lambda _n)$ of weakly decreasing nonnegative integers. The number of strictly positive entries $\lambda _i$ is called the \textbf{length} of $\pmb{\lambda}$. A partition $\pmb{\mu}$ is smaller than a partition $\pmb{\lambda}$ with respect to the \textbf{inclusion order} if $\mu _i \leq \lambda _i$ for all $i$. The partition $\pmb{\mu}$ is smaller than a partition $\pmb{\lambda}$ with respect to the \textbf{domination order}, denoted $\pmb{\lambda}\unrhd\pmb{\mu}$, if $\sum _{i= 1}^n \lambda _i = \sum _{i= 1}^n \mu _i$ and $\sum _{i= 1}^k \lambda _i \geq \sum _{i=1}^k \mu _i$ for all $k$. A \textbf{skew Young diagram} of shape $\pmb{\lambda} / \pmb{\mu}$ is an axes-parallel arrangement of unit squares in the plane centered at the coordinates $\left\{(i,j)\in \mathbb{Z}^2\colon \mu_i <j\leq \lambda _i\right\}$. A \textbf{semi-standard Young tableau} is a Young diagram together with a filling of the boxes with natural numbers such that the numbers are strictly increasing in each column and weakly increasing in every row. Let $\mathbb{T}^n_{\pmb\lambda / \pmb\mu}$ denote the set of semi-standard Young tableaux filled with numbers in $[n]=\{1,2,\ldots,n\}$. For every $T$ in $\mathbb{T}^n_{\pmb\lambda / \pmb\mu}$ let $w(T)$ be the vector $\mathbf{t}=(t_1,\ldots, t_n)$ where $t_i$ is the number of boxes filled with $i$. The vector $w(T)$ is called the \textbf{weight} of $T$. The \textbf{Kostka coefficient} $K_{\pmb\lambda / \pmb\mu, \mathbf{w}}$ equals the number of tableaux of shape $\pmb\lambda / \pmb\mu$ with weight $\mathbf{w}$. In particular, $K_{\pmb\lambda / \pmb\mu, \mathbf{w}}>0$ if and only if there is $T\in \mathbb{T}^n_{\pmb\lambda / \pmb\mu}$ with $w(T)=\mathbf{w}$.  The \textbf{skew Schur polynomial} of shape $\pmb{\lambda}/\pmb{\mu}$ is defined as
\[
s_{\pmb\lambda /\pmb\mu }(\mathbf{x}) \ = \ \sum _{T\in \mathbb{T}^n_{\pmb\lambda / \pmb\mu}}\mathbf{x}^{w(T)} \in \mathbb C [x_1^{\pm 1},\ldots, x_n^{\pm 1}]\, .
\]
In~\cite{Alexandersson} Alexandersson proved the following recursion for Schur polynomials. 
\begin{thm}[{\cite[Theorem 1]{Alexandersson}}]\label{thm:SchurRec}
Let $n$ be a natural number and let $\pmb{\kappa},\pmb{\lambda},\pmb{\mu},\pmb{\nu}$ be partitions of length at most $n$ such that $\pmb\lambda \supseteq \pmb\mu$ and $\pmb\kappa +k \pmb\lambda \supseteq \pmb\nu + k \pmb\mu$ for some positive integer $k$. Then there is a natural number $r$ such that the sequence $\{s_{\pmb\kappa +l\pmb\lambda /\pmb\nu +l\pmb\mu}(\mathbf{x})\}_{l=r}^\infty$ satisfies a linear recursion with characteristic polynomial
\[
\chi (X) \ = \ \prod _{T\in \mathbb{T}^n_{\pmb\lambda / \pmb\mu}}\left(X-\mathbf{x}^{w(T)}\right) \, .
\]
\end{thm}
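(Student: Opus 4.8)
The plan is to realise skew Schur polynomials as integer point transforms of Gelfand--Tsetlin polytopes, to prove that along the ray $l\mapsto(\pmb\kappa+l\pmb\lambda)/(\pmb\nu+l\pmb\mu)$ these polytopes eventually form an arithmetic progression under Minkowski addition with fixed summand $\GT(\pmb\lambda,\pmb\mu)$, and then to feed this into Proposition~\ref{prop:projection}.

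For partitions $\pmb\tau\supseteq\pmb\beta$ of length at most $n$ let $\GT(\pmb\tau,\pmb\beta)\subseteq\R^{(n-1)n}$ be the polytope of arrays $a=(a_{i,j})_{1\le i\le n-1,\,1\le j\le n}$ satisfying $a_{i,j}\ge a_{i-1,j}\ge a_{i,j+1}$ for all $1\le i\le n$, where $a_{0,j}:=\beta_j$, $a_{n,j}:=\tau_j$ and $a_{i,n+1}:=0$. These inequalities have the form $Ca\le d(\pmb\tau,\pmb\beta)$ for a fixed totally unimodular matrix $C$ and a map $d$ linear in $(\pmb\tau,\pmb\beta)$, so $\GT(\pmb\tau,\pmb\beta)$ is a bounded lattice polytope, and it is nonempty since $\ell(\pmb\tau)\le n$. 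It is classical that the lattice points of $\GT(\pmb\tau,\pmb\beta)$ biject with $\mathbb{T}^n_{\pmb\tau/\pmb\beta}$ in a weight-compatible way: if $a\leftrightarrow T$ then $w(T)=L(a)+M(\pmb\tau,\pmb\beta)$, where $L\colon\R^{(n-1)n}\to\R^n$ is a fixed linear map with $L(\Z^{(n-1)n})\subseteq\Z^n$ and $M(\pmb\tau,\pmb\beta)=(-|\pmb\beta|,0,\dots,0,|\pmb\tau|)$ is linear in $(\pmb\tau,\pmb\beta)$. Writing $\bar L$ for the induced algebra homomorphism (as in Proposition~\ref{prop:projection}), this yields
\[
s_{\pmb\tau/\pmb\beta}(\mathbf x)\ =\ \mathbf x^{M(\pmb\tau,\pmb\beta)}\cdot\bar L\bigl(\sigma_{\GT(\pmb\tau,\pmb\beta)}(\mathbf x)\bigr)\,.
\]

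The main step, which I expect to be the real obstacle, is to show that $\GT_l:=\GT(\pmb\kappa+l\pmb\lambda,\pmb\nu+l\pmb\mu)$ is eventually an arithmetic progression. First, $\GT_l\ne\emptyset$ for every $l\ge k$, because each entry of $(\pmb\kappa+l\pmb\lambda)-(\pmb\nu+l\pmb\mu)$ is nondecreasing in $l$ (as $\pmb\lambda\supseteq\pmb\mu$) and nonnegative at $l=k$; let $l_1\le k$ be minimal with $\GT_{l_1}\ne\emptyset$. Consider the polyhedron
\[
\widehat{\GT}\ :=\ \bigl\{(l,a)\in\R^{1+(n-1)n}\colon l\ge l_1,\ Ca\le d(\pmb\kappa,\pmb\nu)+l\,d(\pmb\lambda,\pmb\mu)\bigr\}\,,
\]
whose slice at height $l$ is $\GT_l$. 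It is pointed, and since $P:=\GT(\pmb\lambda,\pmb\mu)=\{a\colon Ca\le d(\pmb\lambda,\pmb\mu)\}$ is bounded its recession cone is $\cone(\{(1,v)\colon v\in P\})$; so by the decomposition theorem for polyhedra $\widehat{\GT}=\conv(W)+\cone(\{(1,v)\colon v\in P\})$ for a finite vertex set $W$. Let $l_0$ be the largest first coordinate of a point of $W$. For $l\ge l_0$, slicing $\widehat{\GT}$ at height $l$ gives $\GT_l=\GT_{l_0}+(l-l_0)\,P$: the inclusion $\supseteq$ is immediate from linearity of the defining inequalities, and for $\subseteq$ one writes a point of $\GT_l$ as a convex combination of points of $W$ (all at height $\le l_0$) plus a recession vector $(t,td)$ with $d\in P$, and pushes the bounded part up to height $l_0$ along the direction $(1,d)$.

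Now re-index by $l=l_0+j$. Then $\GT_{l_0+j}=jP+Q$ with lattice polytopes $P=\GT(\pmb\lambda,\pmb\mu)$ and $Q:=\GT_{l_0}$, and by the displayed identity and linearity of $M$,
\[
s_{\pmb\kappa+(l_0+j)\pmb\lambda/\pmb\nu+(l_0+j)\pmb\mu}(\mathbf x)\ =\ \mathbf x^{M(\pmb\kappa,\pmb\nu)+l_0M(\pmb\lambda,\pmb\mu)}\cdot\bigl(\mathbf x^{M(\pmb\lambda,\pmb\mu)}\bigr)^{j}\cdot\bar L\bigl(\sigma_{jP+Q}(\mathbf x)\bigr)\,.
\]
By Proposition~\ref{prop:projection}, $\{\bar L(\sigma_{jP+Q}(\mathbf x))\}_{j\ge0}$ satisfies a linear recursion with characteristic polynomial $\prod_{v\in V(P)}(X-\mathbf x^{L(v)})$. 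Multiplying the $j$-th term by $(\mathbf x^{M(\pmb\lambda,\pmb\mu)})^{j}$ multiplies every root by the monomial $\mathbf x^{M(\pmb\lambda,\pmb\mu)}$, and multiplying the whole sequence by the constant monomial $\mathbf x^{M(\pmb\kappa,\pmb\nu)+l_0M(\pmb\lambda,\pmb\mu)}$ does not affect the characteristic polynomial; hence $\{s_{\pmb\kappa+l\pmb\lambda/\pmb\nu+l\pmb\mu}\}_{l\ge l_0}$ satisfies a linear recursion with characteristic polynomial $\prod_{v\in V(P)}(X-\mathbf x^{M(\pmb\lambda,\pmb\mu)+L(v)})=\prod_{v\in V(P)}(X-\mathbf x^{w(T_v)})$, where $T_v\in\mathbb{T}^n_{\pmb\lambda/\pmb\mu}$ is the tableau corresponding to the integral vertex $v$ of $P$. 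Since $v\mapsto T_v$ is injective, this polynomial divides $\chi(X)=\prod_{T\in\mathbb{T}^n_{\pmb\lambda/\pmb\mu}}(X-\mathbf x^{w(T)})$, and a sequence obeying a linear recursion with a given monic characteristic polynomial also obeys the recursion with any monic multiple of it. Taking $r:=l_0+|\mathbb{T}^n_{\pmb\lambda/\pmb\mu}|$ then completes the proof.
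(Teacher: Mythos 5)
Your argument is correct, and its overall architecture coincides with the one the paper uses for the stronger Corollary~\ref{cor:Minimal} (from which Theorem~\ref{thm:SchurRec} follows, since the vertex set of $\mathbf{GL}_{\pmb\lambda/\pmb\mu}$ injects into its lattice points and a sequence obeying a recursion also obeys any recursion whose characteristic polynomial is a monic multiple): realize $s_{\pmb\tau/\pmb\beta}$ as a monomial times the image of $\sigma_{\mathbf{GL}_{\pmb\tau/\pmb\beta}}$ under the linear weight map, show that the polytopes $\mathbf{GL}_{\pmb\kappa+l\pmb\lambda/\pmb\nu+l\pmb\mu}$ eventually satisfy $\mathbf{GL}_{\pmb\kappa+l\pmb\lambda/\pmb\nu+l\pmb\mu}=\mathbf{GL}_{\pmb\kappa+r\pmb\lambda/\pmb\nu+r\pmb\mu}+(l-r)\mathbf{GL}_{\pmb\lambda/\pmb\mu}$, and feed this into Proposition~\ref{prop:projection}. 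The genuine difference is in the middle step: the paper obtains the Minkowski decomposition by a reindexing trick (choosing $r$ so that the marking $(r\mathbf{f}+\mathbf{g})$ is ordered compatibly with $\mathbf{f}$) and then citing \cite[Theorem~2.10]{fang2018minkowski}, whereas you prove it from scratch by fibering the family over $l$, computing the recession cone of the total polyhedron $\widehat{\GT}$ as $\cone(\{(1,v)\colon v\in P\})$, and slicing the Minkowski--Weyl decomposition above the heights of all vertices. I checked the slicing argument (both inclusions, and the pointedness and nonemptiness of $\widehat{\GT}$) and it is sound; your total-unimodularity remark also correctly supplies the fact that $\GT(\pmb\lambda,\pmb\mu)$ is a lattice polytope, which Proposition~\ref{prop:projection} needs for $P$ (it needs nothing for $Q=\GT_{l_0}$). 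What your route buys is self-containedness -- no appeal to the Fang--Fourier--Litza--Pegel result -- at the cost of a slightly weaker conclusion than Corollary~\ref{cor:Minimal} would give if you stopped at the vertex product; but since you only claim Theorem~\ref{thm:SchurRec}, passing to the monic multiple $\prod_{T\in\mathbb{T}^n_{\pmb\lambda/\pmb\mu}}(X-\mathbf{x}^{w(T)})$ is exactly right. The only nitpicks are cosmetic: the choice of $l_1$ is immaterial (you could simply take $l_1=k$), and the exact form of the affine correction $M(\pmb\tau,\pmb\beta)$ does not match the paper's indexing conventions, but only its linearity and integrality are used.
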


Furthermore, in~\cite{Alexandersson} the following conjecture concerning the minimal polynomial was stated. For every vector $\mathbf{w}$ let $\overline{\mathbf{w}}$ denote the vector obtained from $\mathbf{w}$ by rearranging its coordinates in non-increasing order.

\begin{conj}[{\cite[Conjecture 25]{Alexandersson}}]\label{conj:SchurMinimal}
Let $\pmb{\kappa},\pmb{\lambda},\pmb{\mu},\pmb{\nu}$ be as in Theorem~\ref{thm:SchurRec} and let
\[
W \ = \ \{\mathbf{w}\in \mathbb{N}^n\colon K_{\pmb{\lambda}/\pmb{\mu},\mathbf{w}}>0 \text{ and }\overline{\mathbf{w}}\unrhd \overline{\pmb{\lambda}-\pmb{\mu}}\} \, .
\]
Then, for sufficiently large $r$, $\{s_{\kappa +l\mu /\lambda +l\nu}(\mathbf{x})\}_{l=r}^\infty$ satisfies a linear recursion with minimal polynomial
\[
\chi (X) \ = \ \prod _{\mathbf{w}\in W}\left(X-\mathbf{x}^{\mathbf{w}}\right) \, .
\]
\end{conj}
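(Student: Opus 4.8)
The plan is to realise the skew Schur polynomials as projected integer point transforms of Gelfand--Tsetlin polytopes and then to feed this into Proposition~\ref{prop:projection}, upgrading its conclusion to a statement about the \emph{minimal} polynomial. For a skew shape $\pmb\lambda/\pmb\mu$ the semistandard Young tableaux in $\mathbb{T}^n_{\pmb\lambda/\pmb\mu}$ are in bijection with the lattice points of a skew Gelfand--Tsetlin polytope $\OGT_{\pmb\lambda/\pmb\mu}$, cut out by the interlacing inequalities with boundary equalities recording $\pmb\lambda$ and $\pmb\mu$; this is a lattice polytope. The content map $f$ sending a pattern to the vector $(t_1,\dots,t_n)$ of its consecutive row-sum differences is linear with $f(\mathbb{Z}^n)\subseteq\mathbb{Z}^n$, and by construction $\bar f(\sigma_{\OGT_{\pmb\lambda/\pmb\mu}}(\mathbf{x}))=s_{\pmb\lambda/\pmb\mu}(\mathbf{x})$.

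First I would verify that the boundary data are Minkowski-linear at the level of polytopes, i.e.\ that $\OGT_{\pmb\kappa+l\pmb\lambda/\pmb\nu+l\pmb\mu}=l\,\OGT_{\pmb\lambda/\pmb\mu}+\OGT_{\pmb\kappa/\pmb\nu}$ for all $l$ large enough that $\pmb\kappa+l\pmb\lambda\supseteq\pmb\nu+l\pmb\mu$. Writing $P=\OGT_{\pmb\lambda/\pmb\mu}$ and $Q=\OGT_{\pmb\kappa/\pmb\nu}$, Proposition~\ref{prop:projection} then yields at once that $\{s_{\pmb\kappa+l\pmb\lambda/\pmb\nu+l\pmb\mu}(\mathbf{x})\}_{l\ge r}$ satisfies a linear recursion with characteristic polynomial $\prod_{\mathbf{v}\in\V(P)}(X-\mathbf{x}^{f(\mathbf{v})})$. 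This already recovers and sharpens Theorem~\ref{thm:SchurRec}, since only the vertices of $P$, rather than all of $\mathbb{T}^n_{\pmb\lambda/\pmb\mu}$, contribute factors.

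It then remains to pass from this product over vertices to the claimed minimal polynomial $\prod_{\mathbf{w}\in W}(X-\mathbf{x}^{\mathbf{w}})$, which I would do in two steps. For the \emph{support} I would prove that the set of distinct projected vertex contents is exactly $W$: that (i) every vertex $\mathbf{v}$ of $P$ has content $f(\mathbf{v})$ with $\overline{f(\mathbf{v})}\unrhd\overline{\pmb\lambda-\pmb\mu}$, and (ii) every achievable content $\mathbf{w}$ with $\overline{\mathbf{w}}\unrhd\overline{\pmb\lambda-\pmb\mu}$ is realised by some vertex. For \emph{minimality} I would adapt the functional argument from the proof of Theorem~\ref{thm:reclatsum}: for each $\mathbf{w}\in W$ one must exhibit a monomial in $s_{\pmb\kappa+l\pmb\lambda/\pmb\nu+l\pmb\mu}(\mathbf{x})$ whose exponent is extremal in the direction singling out $\mathbf{w}$, so that it occurs in the leading term but in none of the shorter-recursion terms, forcing the factor $(X-\mathbf{x}^{\mathbf{w}})$ to survive.

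The hard part will be precisely this last passage, for two linked reasons. First, Proposition~\ref{prop:projection} asserts \emph{nothing} about minimality: as the Ehrhart example and the open Question after it show, projection by $\bar f$ can collapse the order of the recursion, because distinct vertex monomials may coincide after projection and the extremal exponent in a given direction may come from a \emph{non}-vertex tableau. Second, the combinatorial identification in (i)--(ii) is delicate, since a vertex of the skew Gelfand--Tsetlin polytope need not have dominance-extremal content, and conversely a dominance-maximal achievable weight need not be attained at a vertex. Pinning down exactly which achievable weights are dominance-maximal, and ruling out that any such weight admits a shorter recursion after projection, is the point on which the whole statement turns; I expect essentially all of the difficulty --- and any obstruction to the claim as stated --- to be concentrated there.
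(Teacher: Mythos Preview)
Your proposal is a plan to \emph{prove} the conjecture, but the paper \emph{refutes} it: the statement is false, and the paper establishes this by a concrete counterexample rather than by the kind of argument you sketch. Everything in your first two paragraphs is essentially what the paper does to reach Corollary~\ref{cor:Minimal} (the recursion with characteristic polynomial $\prod_{\mathbf v\in V}(X-\mathbf x^{w(\mathbf v)})$ over the vertices of the Gelfand--Tsetlin polytope), so up to that point you and the paper are aligned. The divergence is in your third paragraph, where you attempt to identify $\{w(\mathbf v):\mathbf v\in V\}$ with $W$ and then argue minimality.

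The precise failure is your step (ii): it is \emph{not} true that every achievable weight $\mathbf w$ with $\overline{\mathbf w}\unrhd\overline{\pmb\lambda-\pmb\mu}$ is realised at a vertex of $\mathbf{GL}_{\pmb\lambda/\pmb\mu}$. The paper's Example~\ref{example} takes $n=3$, $\pmb\lambda=(5,3,1)$, $\pmb\mu=(3,0,0)$, and exhibits a tableau of weight $(4,2,0)$; since $(4,2,0)\unrhd(3,2,1)=\overline{\pmb\lambda-\pmb\mu}$, this weight lies in $W$. A short parity check on the possible vertex patterns (whose entries must lie in $\{0,1,3,5\}$) shows that no vertex of $\mathbf{GL}_{\pmb\lambda/\pmb\mu}$ has weight $(4,2,0)$. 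Hence $\prod_{\mathbf w\in W}(X-\mathbf x^{\mathbf w})$ does not divide $\prod_{\mathbf v\in V}(X-\mathbf x^{w(\mathbf v)})$, so by Corollary~\ref{cor:Minimal} it cannot be the minimal polynomial. You were right to flag exactly this passage as the place where ``any obstruction to the claim as stated'' would appear; the obstruction is real, and the correct conclusion is a disproof, not a proof.
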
 

We use Theorem~\ref{thm:reclatsum} and a well-known correspondence between elements in $\mathbb{T}^n_{\pmb\lambda / \pmb\mu}$ and lattice points in the Gelfand-Tsetlin-Polytope $\mathbf{GL}_{\pmb\lambda / \pmb\mu}$ to improve Theorem~\ref{thm:SchurRec} and to given an example in which the polynomial in Conjecture~\ref{conj:SchurMinimal} is not minimal thus refuting the conjecture. 

\begin{figure}
\begin{center}
\begin{minipage}{0.4\textwidth}
\begin{equation*}
\begin{array}{ccccccccc}
   &       & & x_{1,1} & & x_{1,2} &\ldots & x_{1,n}  &   \\
          &&x_{2,1}&   &x_{2,2}&\ldots   &x_{2,n}&    &  \\
                    &\iddots&   &\iddots&   &\iddots&    &&  \\
        x_{n+1,1} & & x_{n+1,2} &\ldots & x_{n+1,n} & &  & & 
    \end{array}
\end{equation*}
\end{minipage}
\hspace*{1cm}
\begin{minipage}{0.4\textwidth}
\begin{equation*}
\begin{array}{ccccccccc}
   &       & & 1 & & 3 & & 5  &   \\
          &&1&   &3&   &5&    &  \\
                    &1&   &2&   &4&    &&  \\
        1 & & 1 & & 4 & &  & & 
    \end{array}
\end{equation*}
\end{minipage}
\end{center}
\caption{Gelfand-Tsetlin patterns.}
\label{fig:GTPgeneral}
\end{figure}

There is a one-to-one correspondence between semi-standard Young tableux and Gelfand-Tsetlin patterns. A \textbf{Gelfand-Tsetlin pattern} is a rectangular array of nonnegative real numbers $\{x_{i,j}\}_{i=1,\ldots,n+1\atop j=1,\ldots,n}$ arranged as in Figure~\ref{fig:GTPgeneral} such that the entries are weakly increasing in north-east and south-east direction, that is $x_{i,j}\leq x_{i+1,j+1}$ for all $i,j$ and $x_{i,j}\leq x_{l,j}$ for all $i>l$. For fixed top and bottom rows the family of Gelfand-Tsetlin patterns forms a polytope, the \textbf{Gelfand-Tsetlin polytope}, which belongs to the class of marked order polytopes introduced by Ardila, Bliem and Salazar~\cite{ArdilaBliemSalazar}. There is a well-known one-to-one correspondence between elements of $\mathbb{T}^n_{\pmb\lambda / \pmb\mu}$ and integer valued Gelfand-Tsetlin patterns with top row $\pmb \lambda$ and bottom row $\pmb \mu$, that is, lattice points in the corresponding Gelfand-Tsetlin polytope $\mathbf{GL}_{\pmb\lambda / \pmb\mu}$.  Via this correspondence, the weight function can be represented as a linear function on $\mathbf{GL}_{\pmb\lambda / \pmb\mu}$, namely for every Gelfand-Tsetlin pattern $\mathbf{x}=\{x_{i,j}\}$ the $i$-th coordinate of the weight $w(\mathbf{x})$ equals $\sum _{k=1}^n (x_{i,k}-x_{i+1,k})$ for all $1\leq i\leq n$. Further details may be found in ~\cite{EC2}. It follows that
\[
s_{\pmb\lambda /\pmb\mu }(\mathbf{x}) \ = \ \sum _{p}\mathbf{x}^{w(p)} \, ,
\]
where $p$ is over all lattice points in $\mathbf{GL}_{\pmb\lambda / \pmb\mu}$.

As a corollary of Theorem~\ref{thm:reclatsum} we obtain the following.
\begin{cor}\label{cor:Minimal}
Let $n$ be a natural number and let $\pmb{\kappa},\pmb{\lambda},\pmb{\mu},\pmb{\nu}$ be partitions of length at most $n$ such that $\pmb\lambda \supseteq \pmb\mu$ and $\pmb\kappa +k \pmb\lambda \supseteq \pmb\nu + k \pmb\mu$ for some positive integer $k$. Let $V$ be the set of vertices of $\mathbf{GL}_{\pmb\lambda / \pmb\mu}$. Then there is an integer $r\gg 0$ such that $\{s_{\pmb\kappa +l\pmb\lambda /\pmb\nu +l\pmb\mu}(\mathbf{x})\}_{l=r}^\infty$ satisfies a linear recursion with characteristic polynomial
\[
\chi (X) \ = \ \prod _{\mathbf{v}\in V}\left(X-\mathbf{x}^{w(\mathbf{v})}\right) \, .
\]
\end{cor}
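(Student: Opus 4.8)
The plan is to transfer the statement to the Gelfand–Tsetlin polytopes and then quote Proposition~\ref{prop:projection}. Recall from the discussion preceding the corollary that the weight map is a linear map $w$ sending integer Gelfand–Tsetlin patterns to integer vectors in $\mathbb{Z}^n$, so that the construction $\bar f$ of Proposition~\ref{prop:projection} applies with $f=w$, and that for any partitions $\pmb\alpha\supseteq\pmb\beta$ of length at most $n$ one has $s_{\pmb\alpha/\pmb\beta}(\mathbf{x})=\bar w\bigl(\sigma_{\mathbf{GL}_{\pmb\alpha/\pmb\beta}}(\mathbf{x})\bigr)$, because the lattice points of $\mathbf{GL}_{\pmb\alpha/\pmb\beta}$ are in weight-preserving bijection with $\mathbb{T}^n_{\pmb\alpha/\pmb\beta}$; moreover $\mathbf{GL}_{\pmb\alpha/\pmb\beta}$ is a lattice polytope, being an integral marked order polytope \cite{ArdilaBliemSalazar}. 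Thus I would apply Proposition~\ref{prop:projection} with $P=\mathbf{GL}_{\pmb\lambda/\pmb\mu}$, $f=w$ and a suitable lattice polytope $Q$; the work is to realise the skew shapes $\pmb\kappa+l\pmb\lambda/\pmb\nu+l\pmb\mu$, for $l$ large, as dilates $kP+Q$.

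For this I would first record two identities of Gelfand–Tsetlin polytopes. Since every defining inequality of $\mathbf{GL}_{\pmb\alpha/\pmb\beta}$ other than those fixing the top and bottom rows is homogeneous, dilation gives $k\cdot\mathbf{GL}_{\pmb\alpha/\pmb\beta}=\mathbf{GL}_{k\pmb\alpha/k\pmb\beta}$ for all $k\geq0$. Secondly, for partitions of length at most $n$ with $\pmb\alpha\supseteq\pmb\beta$ and $\pmb\alpha'\supseteq\pmb\beta'$,
\[
\mathbf{GL}_{\pmb\alpha/\pmb\beta}+\mathbf{GL}_{\pmb\alpha'/\pmb\beta'}=\mathbf{GL}_{\pmb\alpha+\pmb\alpha'/\pmb\beta+\pmb\beta'}\, .
\]
The inclusion $\subseteq$ is immediate, since a sum of two Gelfand–Tsetlin patterns is again weakly increasing in the north-east and south-east directions and has as top and bottom rows the corresponding sums. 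The reverse inclusion is the Minkowski additivity of marked order polytopes in their markings, which is available here because both summands are nonempty; I would deduce it from the marked-poset-polytope description of \cite{ArdilaBliemSalazar}, or prove it directly by producing, for a given pattern $\mathbf{x}\in\mathbf{GL}_{\pmb\alpha+\pmb\alpha'/\pmb\beta+\pmb\beta'}$, a canonical summand in $\mathbf{GL}_{\pmb\alpha/\pmb\beta}$ obtained by propagating the bounds imposed by $(\pmb\alpha,\pmb\beta)$ through the pattern via a min/max closure and then checking that the complementary pattern lies in $\mathbf{GL}_{\pmb\alpha'/\pmb\beta'}$. This last verification is the step I expect to be the main obstacle; everything else is bookkeeping on skew shapes together with the main theorem.

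Next I would fix $r$. Reading the hypothesis $\pmb\kappa+k\pmb\lambda\supseteq\pmb\nu+k\pmb\mu$ coordinatewise: for each index $i$ with $\lambda_i=\mu_i$ it forces $\kappa_i\geq\nu_i$, hence $\kappa_i+l\lambda_i\geq\nu_i+l\mu_i$ for all $l\geq0$; for each index $i$ with $\lambda_i>\mu_i$, the inequality $\kappa_i+l\lambda_i\geq\nu_i+l\mu_i$ holds once $l\geq(\nu_i-\kappa_i)/(\lambda_i-\mu_i)$. Letting $r$ be the largest of these finitely many thresholds (and $r\geq0$), we obtain $\pmb\kappa+l\pmb\lambda\supseteq\pmb\nu+l\pmb\mu$ for all $l\geq r$. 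In particular $Q:=\mathbf{GL}_{\pmb\kappa+r\pmb\lambda/\pmb\nu+r\pmb\mu}$ is a nonempty lattice polytope, and the two identities above yield
\[
\mathbf{GL}_{\pmb\kappa+(r+k)\pmb\lambda/\pmb\nu+(r+k)\pmb\mu}=k\cdot\mathbf{GL}_{\pmb\lambda/\pmb\mu}+Q
\]
for all $k\geq0$.

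Finally I would apply Proposition~\ref{prop:projection} with $P=\mathbf{GL}_{\pmb\lambda/\pmb\mu}$, this $Q$, and $f=w$. By the previous display and the identity $s_{\pmb\alpha/\pmb\beta}(\mathbf{x})=\bar w(\sigma_{\mathbf{GL}_{\pmb\alpha/\pmb\beta}}(\mathbf{x}))$, the sequence $\{\bar f(\sigma_{kP+Q}(\mathbf{x}))\}_{k\geq0}$ is precisely $\{s_{\pmb\kappa+(r+k)\pmb\lambda/\pmb\nu+(r+k)\pmb\mu}(\mathbf{x})\}_{k\geq0}$, which therefore satisfies a linear recursion with characteristic polynomial $\prod_{\mathbf{v}\in V}(X-\mathbf{x}^{w(\mathbf{v})})=\chi(X)$. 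Reindexing by $l=r+k$ gives the corollary.
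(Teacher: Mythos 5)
Your overall strategy coincides with the paper's: realise the shifted skew shapes as $k\,\mathbf{GL}_{\pmb\lambda/\pmb\mu}+Q$ for a fixed lattice polytope $Q$ and then apply Proposition~\ref{prop:projection} with $f=w$. The gap is the Minkowski identity $\mathbf{GL}_{\pmb\alpha/\pmb\beta}+\mathbf{GL}_{\pmb\alpha'/\pmb\beta'}=\mathbf{GL}_{\pmb\alpha+\pmb\alpha'/\pmb\beta+\pmb\beta'}$, which you assert for arbitrary pairs of skew shapes and correctly flag as the main obstacle: it is false in that generality (only the inclusion $\subseteq$ that you call immediate holds). For $n=2$ the free middle row of a pattern in $\mathbf{GL}_{\pmb\alpha/\pmb\beta}$ ranges over the box $[\max(\alpha_2,\beta_1),\alpha_1]\times[\beta_2,\min(\alpha_2,\beta_1)]$; since $\max$ is superadditive and $\min$ is subadditive, additivity fails whenever the two markings resolve the comparison between $\alpha_2$ and $\beta_1$ in opposite ways. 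Concretely, $\mathbf{GL}_{(2,2)/(0,0)}$ is the single point $(2,0)$ and $\mathbf{GL}_{(2,1)/(2,0)}$ is the segment $\{2\}\times[0,1]$, so their Minkowski sum is a segment, whereas $\mathbf{GL}_{(4,3)/(2,0)}=[3,4]\times[0,2]$ is two-dimensional. This also defeats your choice of $r$: taking $\pmb\lambda=(2,2)$, $\pmb\mu=(0,0)$, $\pmb\kappa=(2,1)$, $\pmb\nu=(2,0)$, the containment $\pmb\kappa+l\pmb\lambda\supseteq\pmb\nu+l\pmb\mu$ holds for all $l\geq0$, so your thresholds give $r=0$ and $Q=\mathbf{GL}_{(2,1)/(2,0)}$, yet $\mathbf{GL}_{(4,3)/(2,0)}\neq 1\cdot\mathbf{GL}_{(2,2)/(0,0)}+Q$ by the computation above.

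The paper's proof chooses $r$ for a different, stronger reason: writing $\mathbf{f}=(\pmb\lambda,\pmb\mu)$ and $\mathbf{g}=(\pmb\kappa,\pmb\nu)$, it takes $r$ so large that $f_i<f_j$ implies $rf_i+g_i<rf_j+g_j$, i.e.\ so that the two marking vectors $\mathbf{f}$ and $r\mathbf{f}+\mathbf{g}$ admit a common sorting permutation. This order compatibility of the markings is precisely the hypothesis of the Minkowski decomposition theorem for these polytopes that the paper then invokes (\cite[Theorem 2.10]{fang2018minkowski}) to get $\mathbf{GL}_{\pmb\kappa+l\pmb\lambda/\pmb\nu+l\pmb\mu}=\mathbf{GL}_{\pmb\kappa+r\pmb\lambda/\pmb\nu+r\pmb\mu}+(l-r)\mathbf{GL}_{\pmb\lambda/\pmb\mu}$; in the example above $r=1$ already restores both compatibility and the decomposition. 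To repair your argument you need to (i) enlarge $r$ to enforce this compatibility of orderings, not just containment of shapes, and (ii) replace your unrestricted additivity claim by the cited theorem (or a proof of the reverse inclusion under that compatibility hypothesis); the remaining steps, including the reduction to Proposition~\ref{prop:projection} via the linearity of $w$, are exactly as in the paper.
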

\begin{proof}
Let $\mathbf{f}=(\pmb{\lambda},\pmb{\mu})$ and $\mathbf{g}=(\pmb{\kappa},\pmb{\nu})$. Then there is an $r\gg 0$ such that if $f_i<f_j$ then $rf_i+g_i<rf_j+g_j$ for all $i\neq j$. In particular, one can find a permutation $\sigma \in S_{2n}$ such that
\[
f_{\sigma(1)}\leq f_{\sigma(2)}\leq \cdots \leq f_{\sigma(2n)} \quad \text{and} \quad rf_{\sigma(1)}+g_{\sigma (1)}\leq \cdots \leq rf_{\sigma(2n)}+g_{\sigma(2n)} \, .
\]
Then, by Theorem~\cite[Theorem 2.10]{fang2018minkowski},
\[
\mathbf{GL}_{\pmb\kappa +l\pmb \lambda / \pmb \nu +l\pmb \mu } \ = \ \mathbf{GL}_{\pmb\kappa +r\pmb \lambda / \pmb \nu +r\pmb \mu } + (l-r)\mathbf{GL}_{\pmb\lambda / \pmb\mu} \, 
\]
for all $l\geq r$. The claim now follows from Proposition~\ref{prop:projection} since the weight function $w$ is linear.
\end{proof}
Since typically there are more lattice points in $\mathbf{GL}_{\pmb\lambda / \pmb\mu}$ than vertices, Corollary~\ref{cor:Minimal} shows that the characteristic polynomial given in Theorem~\ref{thm:SchurRec} is in general not minimal. The next example shows that also the polynomial given in Conjecture~\ref{conj:SchurMinimal} is not minimal in general, thus refuting it.

\begin{example}\label{example}
Let $n=3$, $\pmb \lambda = (5,3,1)$ and $\pmb \mu = (3,0,0)$. Consider the skew Young tableau $T$ and its corresponding Gelfand-Tsetlin pattern $p$ depicted in Figure~\ref{fig:GTP}. Then 
\[
w(T)=w(p)=(4,2,0)\unrhd (3,2,1)=\overline{\pmb \lambda -\pmb \mu}\, .
\]
From the face structure studied in~\cite{JochemkoSanyal14,pegel2018face} it follows that the coordinates of any vertex of $\mathbf{GL}_{\pmb\lambda / \pmb\mu}$ are in the set $\{0,1,3,5\}$. Let $\mathbf{x}=\{x_{i,j}\}$ be a Gelfand-Tsetlin pattern that is a vertex of $\mathbf{GL}_{\pmb\lambda / \pmb\mu}$. Then $x_{4,1},x_{4,2}$ and $x_{3,1}$ are $0$. Furthermore, $x_{2,1}\in \{0,1\}$. If $x_{2,1}=0$, then the sum of entries of the first row of $\mathbf{x}$ is odd and the sum of entries of the second is even, therefore $w(\mathbf{x})_1$ is odd and $w(\mathbf{x})\neq (4,2,0)$. On the other hand, if $x_{2,1}=1$, then $x_{3,2}\in \{1,3\}$ and in that case $w(\mathbf{x})_2$ is odd and again $w(\mathbf{x})\neq (4,2,0)$. In summary, $(4,2,0)\in W$ is not the weight of a vertex of $\mathbf{GL}_{\pmb\lambda / \pmb\mu}$ and therefore 
\[
\prod _{\mathbf{w}\in W}\left(X-\mathbf{x}^{\mathbf{w}}\right) \nmid \prod _{\mathbf{v}\in V}\left(X-\mathbf{x}^{w(\mathbf{v})}\right) \, .
\]
Therefore, by Corollary~\ref{cor:Minimal}, $\prod _{\mathbf{w}\in W}\left(X-\mathbf{x}^{\mathbf{w}}\right)$ cannot be the minimal polynomial.
\begin{figure}[h]
\begin{center}
\begin{minipage}{0.4\textwidth}
\begin{tikzpicture}[scale=0.8]
\draw (0,0) -- (1,0) -- (1,1) -- (3,1) -- (3,3) -- (5,3) -- (5,2) -- (0,2) -- (0,0);
\draw (0,1) -- (1,1);
\draw (1,1) -- (1,2);
\draw (2,1) -- (2,2);
\draw (4,2) -- (4,3);
\draw (0.5,0.5) node {$3$};
\draw (1.5,1.5) node {$3$};
\draw (2.5,1.5) node {$3$};
\draw (4.5,2.5) node {$3$};
\draw (0.5,1.5) node {$2$};
\draw (3.5,2.5) node {$2$};
\draw (2.5,-1) node {$T$};
\end{tikzpicture}
\end{minipage}
\begin{minipage}{0.4\textwidth}
\begin{equation*}
\begin{array}{ccccccccc}
          & & &\mathbf{1}  && \mathbf{3} & & \mathbf{5}  &   \\
                    & & 0 & & 1 & & 4 & &  \\
          &0&   &0&   &3&    &&  \\
        \mathbf{0} & & \mathbf{0} & & \mathbf{3} & &  & & \\
        &&&&&&&&\\
        &&&&&&&&\\
        &&&&p&&&&
    \end{array}
\end{equation*}
\end{minipage}
\end{center}
\caption{The skew Young tableau $T$ and its corresponding Gelfand-Tsetlin pattern $p$.}
\label{fig:GTP}
\end{figure}

\end{example}

\begin{rem}\label{rem}
To verify the counterexample given in Example~\ref{example} also \cite[Proposition 6]{AlexanderssonIDP} can be used.
\end{rem}

\textbf{Acknowledgements:} The author would like to thank Per Alexandersson and Raman Sanyal for inspiring and fruitful discussions and many helpful comments. The author was partially supported by a Hilda Geiringer Scholarship at
the Berlin Mathematical School, the Knut and Alice Wallenberg Foundation and a Microsoft Research Fellowship of the Simons Institute for the Theory of Computing.

\bibliographystyle{siam}
\bibliography{Paper}

\end{document}